\documentclass[12pt]{amsart}\pagestyle{myheadings}
\usepackage{format}

\newcommand \bfG{{\mathbf G}}
\newcommand \bfT{{\mathbf T}}

\newcommand \sfk{{\mathsf k}}
\newcommand \kun{{\mathsf k^{un}}}
\newcommand \bark{{\bar{\mathsf k}}}

\title[Arthur Packets for p-adic Groups]{ Some unipotent Arthur packets for reductive p-adic Groups}

\begin{document}

\maketitle

\begin{abstract}
Let $\sfk$ be a $p$-adic field and let $\mathbf{G}(\sfk)$ be the $\sfk$-points of a connected reductive group, inner to split. The set of Aubert-Zelevinsky duals of the constituents of a tempered L-packet form an Arthur packet for $\mathbf{G}(\sfk)$. In this paper, we give an alternative characterization of such Arthur packets in terms of the wavefront set, proving in some instances a conjecture of Jiang-Liu and Shahidi. Pursuing an analogy with real and complex groups, 
we define some special unions of Arthur packets which we call \emph{weak} Arthur packets and describe their constituents in terms of their Langlands  parameters. 
\end{abstract}

\tableofcontents

\section{Introduction}

Let $\mathsf k$ be a nonarchimedean local field of characteristic $0$ with ring of integers $\mathfrak o$, finite residue field $\mathbb F_q$ of cardinality $q$ and valuation $\mathsf{val}_{\mathsf k}$. Fix an algebraic closure $\bar{\mathsf k}$ of $\mathsf k$ and let $\kun\subset \bar{\mathsf k}$ be the maximal unramified extension of $\mathsf k$ in $\bar{\mathsf k}$. Let $\mathbf{G}$ be a connected reductive algebraic group defined over $\sfk$ and let $\mathbf{G}(\sfk)$ be the group of $\sfk$-rational points. We assume throughout that $\mathbf{G}(\sfk)$ is inner to split. 

Let $W_{\sfk}$ be the Weil group associated to $\sfk$ \cite[(1.4)]{Tate1979}. Then the Weil-Deligne group is the semidirect product \cite[\S 8.3.6]{Deligne1972}
$$W_{\sfk}' = W_{\sfk} \ltimes \CC$$
where $W_{\sfk}$ acts on $\CC$ via
$$w x w^{-1}=\|w\| x,\qquad x\in \mathbb C,\ w\in W_k.$$
Let $G^{\vee}$ denote the complex Langlands dual group associated to $\mathbf{G}$, see \cite[\S2.1]{Borel1979}. 

\begin{definition}\label{def:Langlandsparams}
A \emph{Langlands parameter} is a continuous homomorphism
\begin{equation}
    \phi: W_{\sfk}'\rightarrow G^\vee
\end{equation}
which respects the Jordan decompositions in $W_{\sfk}'$ and $G^\vee$ (\cite[\S8.1]{Borel1979}). 
\end{definition}

To each Langlands parameter $\phi$, one hopes to associate an $L$-packet $\Pi^{\mathsf{Lan}}_{\phi}(\mathbf{G}(\sfk))$ of irreducible admissible $\mathbf{G}(\sfk)$-representations \cite[\S10]{Borel1979} (note: these packets have not been defined in full generalality, see \cite{Arthur2013} or \cite{Kaletha2022} for a discussion of the status of the local Langlands conjectures).

\begin{definition}
An \emph{Arthur parameter} is a continuous homomorphism
$$\psi: W_{\sfk}' \times \mathrm{SL}(2,\CC) \to G^{\vee}$$
such that
\begin{itemize}
    \item[(i)]  The restriction of $\psi$ to $W_{\sfk}'$ is a tempered Langlands parameter.
       \item[(ii)] The restriction of $\psi$ to $\mathrm{SL}(2,\CC)$ is algebraic.
\end{itemize}
Arthur parameters are considered modulo the $G^{\vee}$-action on the target.
\end{definition}

For each Arthur parameter $\psi$ there is an associated Langlands parameter $\phi_{\psi}: W_{\sfk}' \to G^{\vee}$ defined by the formula
\begin{equation}\label{eq:ArthurLanglands}\phi_{\psi}(w) = \psi(w,\begin{pmatrix}\|w\|^{1/2} & 0\\0 & \|w\|^{-1/2} \end{pmatrix}), \qquad w \in W_{\sfk}',\end{equation}
where $\|\bullet\|$ is the pull-back to $W_{\sfk}'$ of the norm map on $W_{\sfk}$. The local version of Arthur's conjectures can be stated succinctly as follows.

\begin{conj}[Section 6, \cite{Arthur1989}]\label{conj:Arthur}
For each Arthur parameter $\psi$, there is an associated finite set $\Pi_{\psi}^{\mathsf{Art}}(\mathbf{G}(\sfk))$ of irreducible admissible $\mathbf{G}(\sfk)$-representations  called the \emph{Arthur packet} attached to $\psi$. The set $\Pi_{\psi}^{\mathsf{Art}}(\mathbf{G}(\sfk))$ should satisfy various properties, including:
\begin{itemize}
    \item[(i)] $\Pi^{\mathsf{Lan}}_{\phi_{\psi}}(\mathbf{G}(\sfk)) \subseteq \Pi_{\psi}^{\mathsf{Art}}(\mathbf{G}(\sfk))$,
    \item[(ii)] $\Pi_{\psi}^{\mathsf{Art}}(\mathbf{G}(\sfk))$ consists of unitary representations,
\end{itemize}
along with several other properties (e.g. stability, endoscopy), see \cite[Section 4]{Arthur1989} or \cite[Chapter 1]{AdamsBarbaschVogan}.
\end{conj}


Although a general definition is lacking, the packets $\Pi_{\psi}^{\mathsf{Art}}(\mathbf{G}(\sfk))$ have been defined in various special cases. For quasisplit orthogonal and symplectic groups, Arthur has defined them using endoscopic transfer and has proved that they satisfy all of the conjectured properties \cite[Theorems 1.5.1 and 2.2.1]{Arthur2013}. His constructions were extended in \cite{Mok2015} to quasisplit unitary groups. Explicit definitions, compatible with Arthur's, were proposed by M\oe glin in \cite{Mo1,Mo2}, see also the exposition in \cite{Xu} and the references therein. For split $G_2$, some Arthur packets were constructed in \cite{GanGurevich}. Recently, Cunningham and his collaborators have proposed a `microlocal' approach to defining Arthur packets, see \cite{Cunningham}. 

There is one simple class of Arthur parameters for which a general (i.e. case-free) definition of Arthur packets is available.

\begin{definition}\label{def:unipotentparam}
An Arthur parameter $\psi$ is \emph{basic} if the restriction $\psi|_{W_{\sfk}'}$ is trivial. 
\end{definition}

By the Jacobson-Morozov theorem, there is a natural bijection
\begin{align*}
\{\text{nilpotent adjoint } G^{\vee}\text{-orbits}\} &\xrightarrow{\sim} \{\text{basic Arthur parameters}\}/G^{\vee}\\
\OO^{\vee} &\mapsto \psi_{\OO^{\vee}}.
\end{align*}
For each nilpotent orbit $\OO^{\vee} \subset \fg^{\vee}$, we choose an $\mathfrak{sl}(2)$-triple $(e^{\vee},f^{\vee},h^{\vee})$ and consider the semisimple element $q^{\frac{1}{2}h^{\vee}} \in G^{\vee}$. This element is well-defined modulo conjugation by $G^{\vee}$, and therefore determines an `infinitesimal character', see Section \ref{subsec:LLC}. There is a well-known involution defined by Aubert and Zelevinsky on the set of irreducible admissible $\mathbf{G}(\sfk)$-representations, see Section \ref{subsec:AZ}. We denote this involution by $X \mapsto \mathrm{AZ}(X)$. The Arthur packets attached to basic Arthur parameters can be defined as follows, , see for example \cite[\S7.1]{Arthur2013}.

\begin{definition}
Let $\OO^{\vee}$ be a nilpotent adjoint $G^{\vee}$-orbit and let $\psi_{\OO^{\vee}}$ be the associated basic Arthur parameter. Then $\Pi^{\mathsf{Art}}_{\psi_{\OO^{\vee}}}(\mathbf{G}(\sfk))$ is the set of irreducible $\mathbf{G}(\sfk)$-representations $X$ with unipotent cuspidal support such that
\begin{itemize}
    \item[(i)] The infinitesimal character of $X$ is $q^{\frac{1}{2}h^{\vee}}$.
    \item[(ii)] $\AZ(X)$ is tempered.
\end{itemize}
\end{definition}

The \emph{wavefront set} of an admissible $\mathbf{G}(\sfk)$-representation is a fundamental invariant coming from the Harish-Chandra-Howe local character expansion. In its classical form, the wavefront set $\WF(X)$ of $X$ is a collection of nilpotent $\bfG(\sfk)$-orbits in the Lie algebra $\mathfrak g(\sfk)$, namely the \emph{maximal} nilpotent orbits for which the Fourier transforms of the associated orbital integrals contribute to the local character expansion of the distribution character of $X$, \cite[Theorem 16.2]{HarishChandra1999}. In this paper, we consider two coarser invariants (see Section \ref{s:wave} for the precise definitions). The first of these invariants is the \emph{algebraic wavefront set}, denoted $\hphantom{ }^{\bar{\sfk}}\WF(X)$. This is a collection of nilpotent orbits in $\mathfrak{g}(\bark)$, see for example \cite[p. 1108]{Wald18} (where it is simply referred to as the `wavefront set' of $X$). The second invariant is $\CUWF(X)$, a natural refinement of $^{\bark}\WF(X)$ called the \emph{canonical unramified wavefront set}, defined recently in \cite{okada2021wavefront}. This is a collection of nilpotent orbits $\mathfrak g(\kun)$ (modulo a certain equivalence relation $\sim_A$). The relationship between these three invariants is as follows: the algebraic wavefront set $\hphantom{ }^{\bar{\sfk}}\WF(X)$ is deducible from the usual wavefront set $\WF(X)$ as well as the canonical unramified wavefront set $\CUWF(X)$. It is not known whether the canonical unramified wavefront set is deducible from the usual wavefront set, but we expect this to be the case, see \cite[Section 5.1]{okada2021wavefront} for a careful discussion of this topic.

For real and complex groups, it is possible to define Arthur packets in terms of the wavefront set, see \cite{BarbaschVogan1985} and \cite[Section 27] {AdamsBarbaschVogan}. Thus, it is natural to hope for a similar definition in the $p$-adic setting. Our main result (Theorem \ref{thm:main}) gives an alternative characterization of the Arthur packets attached to basic Arthur parameters in terms of the canonical unramified wavefront set. The most precise version of our result requires two additional ingredients which will be introduced in Section \ref{sec:preliminaries} (the classification of nilpotent orbits in $\fg(\bar{\sfk})$ and Achar duality). Here is a simplified version.

\begin{theorem}\label{thm:mainintro}
Let $\OO^{\vee}$ be a nilpotent adjoint $G^{\vee}$-orbit and let $\psi_{\OO^{\vee}}$ be the associated basic Arthur parameter. Then $\Pi^{\mathsf{Art}}_{\OO^{\vee}}(\mathbf{G}(\sfk))$ is the set of irreducible representations $X$ with unipotent cuspidal support such that
\begin{itemize}
    \item[(i)] The infinitesimal character of $X$ is $q^{\frac{1}{2}h^{\vee}}$.
    \item[(ii)] The canonical unramified wavefront set $\CUWF(X)$ is minimal subject to (i). 
\end{itemize}
\end{theorem}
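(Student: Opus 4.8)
The plan is to establish the two inclusions separately, using the known description of $\Pi^{\mathsf{Art}}_{\psi_{\OO^{\vee}}}(\mathbf{G}(\sfk))$ recalled above (fixed infinitesimal character $q^{\frac12 h^{\vee}}$ together with the temperedness of $\AZ(X)$) as the anchor on one side, and the behavior of the canonical unramified wavefront set under the Aubert–Zelevinsky involution as the main engine on the other. The first step is to record the two facts about $\CUWF$ that the argument rests on. One is a \emph{lower bound}: for any irreducible $X$ with infinitesimal character $q^{\frac12 h^{\vee}}$ and unipotent cuspidal support, the orbit $\CUWF(X)$ is bounded below (in the closure order, after applying the relevant duality) by the orbit canonically attached to $\OO^{\vee}$ — this should follow from the compatibility of the wavefront set with the geometry of the (generalized) Springer correspondence and the fact that the infinitesimal character pins down the relevant nilpotent orbit in $\fg^{\vee}$. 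The other is the key identity $\CUWF(\AZ(X)) = d\bigl(\CUWF(X)\bigr)$ for an appropriate duality map $d$ (Achar duality, composed with whatever unramified bookkeeping is needed), which is presumably one of the main technical results proved earlier in the paper or imported from \cite{okada2021wavefront}.

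Granting these, the argument runs as follows. For the inclusion $\Pi^{\mathsf{Art}}_{\psi_{\OO^{\vee}}}(\mathbf{G}(\sfk)) \subseteq \{X : \text{(i) and (ii)}\}$, let $X$ be in the Arthur packet, so (i) holds and $\AZ(X)$ is tempered. A tempered representation has wavefront set equal to (the image of) the regular nilpotent orbit of the relevant Levi — in particular $\CUWF(\AZ(X))$ is \emph{maximal} among representations with the corresponding infinitesimal character. Applying the identity $\CUWF(X) = d^{-1}\bigl(\CUWF(\AZ(X))\bigr)$ and the order-reversing property of $d$, maximality of $\CUWF(\AZ(X))$ translates into minimality of $\CUWF(X)$ subject to (i), which is exactly (ii). For the reverse inclusion, suppose $X$ satisfies (i) and (ii). Consider $\AZ(X)$: it has the same (suitably transported) infinitesimal character, and by the identity and the order-reversal, $\CUWF(\AZ(X))$ is maximal subject to that constraint. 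One then needs to know that maximality of the canonical unramified wavefront set, for a representation with unipotent cuspidal support and fixed infinitesimal character, \emph{forces} temperedness. This is the crucial converse direction; I would prove it by combining the classification of the relevant category of representations (e.g. via the Kazhdan–Lusztig/Lusztig description of unipotent representations in terms of the geometry of the dual group, or Moeglin–Waldspurger-type results) with the fact that in each such block the tempered members are precisely those whose wavefront set achieves the maximum.

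The main obstacle is this last implication — that a maximal $\CUWF$ (subject to the infinitesimal character constraint) characterizes temperedness within the block of representations with unipotent cuspidal support and infinitesimal character $q^{\frac12 h^{\vee}}$. The inequality "tempered $\Rightarrow$ maximal wavefront set" is relatively soft, but the converse requires knowing that no non-tempered representation in the block can have wavefront set as large as a tempered one; this is where one genuinely needs structural input about the block, and where the hypothesis "inner to split" and the unipotent setting are used. I expect to reduce this to a statement about Lusztig's families and the associated cells, where the special/tempered members are singled out by the Springer fiber dimension, and then transport it through Achar duality and the unramified descent of \cite{okada2021wavefront}. A secondary technical point to be careful with is that $\CUWF$ takes values in orbits modulo the equivalence $\sim_A$ and in $\fg(\kun)$ rather than $\fg(\bark)$, so the duality $d$ and the notions of "minimal/maximal" must be set up at the right level of refinement; I would handle this by working with the canonical representatives supplied by the classification in Section \ref{sec:preliminaries} throughout, and only at the end checking that the minimality in (ii) is insensitive to the choices.
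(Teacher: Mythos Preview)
Your proposal has a genuine gap at its core: the two ``facts about $\CUWF$'' you take as input are not the right ones, and one of them is actually false.

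First, the identity you posit, $\CUWF(\AZ(X)) = d(\CUWF(X))$, does not hold and is not what the paper uses. The correct relation (Theorem~\ref{thm:WFformula}(1), imported from \cite{CMBOunipotent}) is
\[
\CUWF(X,\CC) = D(\OO^{\vee}_{\AZ(X)},1),
\]
linking the wavefront set of $X$ to the nilpotent orbit in the \emph{Langlands parameter} of $\AZ(X)$, not to the wavefront set of $\AZ(X)$. These are very different objects: in the $F_4(a_3)$ example (Section~\ref{sec:example}) the five anti-tempered representations $X_5,X_{13},X_{17},X_{19},X_{20}$ all have $\CUWF = (F_4(a_3),1)$, while their AZ-duals $X_1,\dots,X_5$ have wavefront sets ranging from $(F_4,1)$ down to $(F_4(a_3),1)$. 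No order-reversing bijection on wavefront sets can account for this.

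Second, and relatedly, your claim that a tempered representation has maximal wavefront set is false. In the same table, $X_5$ is tempered with $\CUWF(X_5)=(F_4(a_3),1)$, the \emph{minimum}. So not only does your ``main obstacle'' (maximal $\CUWF$ $\Rightarrow$ tempered) fail, but even the direction you call ``relatively soft'' (tempered $\Rightarrow$ maximal $\CUWF$) is wrong.

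The paper's argument (Theorem~\ref{thm:main}) avoids all of this. It never reasons about $\CUWF(\AZ(X))$ or about maximality on the tempered side. Instead it combines four ingredients: the formula $\CUWF(X,\CC)=D(\OO^{\vee}_{\AZ(X)},1)$; the universal lower bound $D(\OO^{\vee},1)\le \CUWF(X,\CC)$ (Theorem~\ref{thm:WFformula}(2)); the elementary characterization that a representation $Y$ with infinitesimal character $q^{\frac12 h^{\vee}}$ is tempered iff $\OO^{\vee}_Y=\OO^{\vee}$ (Lemma~\ref{lem:tempclassification}, which is just the open-orbit condition in $\fg^{\vee}[2]$); and the injectivity of $\OO^{\vee}\mapsto D(\OO^{\vee},1)$. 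Chaining these gives directly
\[
X\in\Pi^{\mathsf{Art}}_{\psi_{\OO^{\vee}}} \iff \OO^{\vee}_{\AZ(X)}=\OO^{\vee} \iff \CUWF(X,\CC)=D(\OO^{\vee},1) \iff \CUWF(X,\CC)\le D(\OO^{\vee},1),
\]
and the last condition is exactly minimality of $\CUWF(X)$ subject to the infinitesimal character constraint, because of the lower bound. Your lower-bound ingredient was correct in spirit; what you were missing is that the other input should be a formula for $\CUWF(X)$ in terms of the \emph{parameter} of $\AZ(X)$, not a duality between two wavefront sets.
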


As a consequence of Theorem \ref{thm:mainintro}, we prove a conjecture of Jiang-Liu and Shahidi in the case of basic Arthur packets (see Corollary \ref{cor:jiang} below).

If we replace $\CUWF(X)$ in condition (ii) with the coarser invariant $^{\bar{\sfk}}\WF(X)$, we get a larger set of representations, which we call a \emph{weak Arthur packet}. We conjecture that this set is a union of Arthur packets (and, in particular, that its constituents are unitary). 

\subsection{Acknowledgments}

The authors would like to thank Kevin McGerty and David Vogan for many helpful conversations. The authors would also like to thank Anne-Marie Aubert, Colette M\oe glin, David Renard, and Maarten Solleveld for their helpful comments and corrections on an earlier draft of this paper. This research was partially supported by the Engineering and Physical Sciences Research Council under grant EP/V046713/1.
The third author was supported by Aker Scholarship.

\section{Preliminaries}\label{sec:preliminaries}

Let $\mathsf{k}$ be a nonarchimedean local field of characteristic $0$ with residue field $\mathbb{F}_q$ of sufficiently large characteristic, ring of integers $\mathfrak{o} \subset \mathsf{k}$, and valuation $\mathsf{val}_{\mathsf{k}}$. Fix an algebraic closure $\bar{\mathsf{k}}$ of $\mathsf{k}$ with Galois group $\Gamma$, and let $\kun \subset \bar{\mathsf{k}}$ be the maximal unramified extension of $\mathsf{k}$ in $\bar{\mathsf{k}}$. 
Let $\mf O$ be the ring of integers of $\kun$.
Let $\mathrm{Frob}$ be the geometric Frobenius element of $\mathrm{Gal}(\kun/\mathsf{k})$, the topological generator which induces the inverse of the automorphism $x\to x^q$ of $\mathbb{F}_q$.

Let $\bfG$ be a connected reductive algebraic group defined over $\sfk$, inner to split, and let $\bfT \subset \mathbf{G}$ be a maximal torus. For any field $F$ containing $\sfk$, we write $\mathbf{G}(F)$, $\mathbf{T}(F)$, etc. for the groups of $F$-rational points. Let $\bfG_{\ad}=\bfG/Z(\bfG)$ denote the adjoint group of $\bfG$.

Write $X^*(\mathbf{T},\bark)$ (resp. $X_*(\mathbf{T},\bark)$) for the lattice of algebraic characters (resp. co-characters) of $\mathbf{T}(\bark)$, and write $\Phi(\mathbf{T},\bark)$ (resp. $\Phi^{\vee}(\mathbf{T},\bark)$) for the set of roots (resp. co-roots). Let
$$\mathcal R=(X^*(\mathbf{T},\bark), \ \Phi(\mathbf{T},\bark),X_*(\mathbf{T},\bark), \ \Phi^\vee(\mathbf{T},\bark), \ \langle \ , \ \rangle)$$
be the root datum corresponding to $\mathbf{G}$, and let $W$ the associated (finite) Weyl group.
Let $G$ be the complex reductive group with the same absolute root data as $\bfG$ and let $\mathbf{G}^\vee$ be the Langlands dual group of $\bfG$, i.e. the connected reductive algebraic group defined and split over $\ZZ$ corresponding to the dual root datum 
$$\mathcal R^\vee=(X_*(\mathbf{T},\bark), \ \Phi^{\vee}(\mathbf{T},\bark),  X^*(\mathbf{T},\bark), \ \Phi(\mathbf{T},\bark), \ \langle \ , \ \rangle).$$
Set $\Omega=X_*(\mathbf{T},\bark)/\ZZ \Phi^\vee(\mathbf{T},\bark)$. The center $Z(\bfG^\vee)$ can be naturally identified with the irreducible characters $\mathsf{Irr} \Omega$, and dually, $\Omega\cong X^*(Z(\bfG^\vee))$. For $\omega\in\Omega$, let $\zeta_\omega$ denote the corresponding irreducible character of $Z(\bfG^\vee)$.

For details regarding the parametrization of inner twists of a group $\bfG(\mathsf k)$, see \cite[\S2]{Vogan1993}, \cite{Kottwitz1984}, \cite[\S2]{Kaletha2016}, or \cite[\S1.3]{ABPS2017} and \cite[\S1]{FengOpdamSolleveld2021}. We only record here that the equivalence classes of inner twists of $\mathbf G$ are parameterized by the Galois cohomology group 
\[H^1(\Gamma, \mathbf G_{\ad})\cong H^1(F,\mathbf G_{\ad}(\kun))\cong\Omega_{\ad}\cong \Irr Z(\bfG^\vee_{\mathsf{sc}}),
\]
where $\bfG^\vee_{\mathsf{sc}}$ is the Langlands dual group of $\bfG_{\ad}$, i.e., the simply connected cover of $\bfG^\vee$, and $F$ denotes the action of $\mathrm{Frob}$ on $\bfG(\kun)$. We identify $\Omega_{\ad}$ with the fundamental group of $\bfG_{\ad}$. The isomorphism above is determined as follows: for a cohomology class $h$ in $H^1(F, \mathbf G_{\ad}(\kun))$, let $z$ be a representative cocycle. Let $u\in \bfG_{\ad}(\kun)$ be the image of $F$ under $z$, and let $\omega$ denote the image of $u$ in $\Omega_{\ad}$. Set $F_\omega=\Ad(u)\circ F$. The corresponding rational structure of $\bfG$ is given by $F_\omega$.
Let $\bfG^\omega$ be the connected reductive group defined over $\sfk$ such that $\bfG(\kun)^{F_\omega}=\bfG^\omega(\mathsf k)$.

\

If $H$ is a complex reductive group and $x$ is an element of $H$ or $\fh$, we write $H(x)$ for the centralizer of $x$ in $H$, and $A_H(x)$ for the group of connected components of $H(x)$. If $S$ is a subset of $H$ or $\fh$ (or indeed, of $H \cup \fh$), we can similarly define $H(S)$ and $A_H(S)$. We will sometimes write $A(x)$, $A(S)$ when the group $H$ is implicit. 
The subgroups of $H$ of the form $H(x)$ where $x$ is a semisimple element of $H$ are called \emph{pseudo-Levi} subgroups of $H$.

\medskip

Let $\mathcal C(\bfG(\mathsf k))$ be the category of smooth complex $\bfG(\mathsf k)$-representations and let $\Pi(\mathbf{G}(\mathsf k)) \subset \mathcal C(\bfG(\mathsf k))$ be the set of irreducible objects. Let $R(\bfG(\mathsf k))$ denote the Grothendieck group of $\mathcal C(\bfG(\mathsf k))$.

\subsection{Nilpotent orbits}\label{subsec:nilpotent}

Let $\mathcal N$ be the functor which takes a field $F$ containing $\sfk$ to the set of nilpotent elements of $\mf g(F)$.
By `nilpotent' in this context we mean the unstable points (in the sense of GIT) with respect to the adjoint action of $\bfG(F)$, see \cite[Section 2]{debacker}.
For $F$ algebraically closed this coincides with all the usual notions of nilpotence.
Let $\mathcal N_o$ be the functor which takes $F$ to the set of orbits in $\mathcal N(F)$ under the adjoint action of $\bfG(F)$.
When $F$ is $\sfk$ or $\kun$, we view $\mathcal N_o(F)$ as a partially ordered set with respect to the closure ordering in the topology induced by the topology on $F$.
When $F$ is algebraically closed, we view $\mathcal N_o(F)$ as a partially ordered set with respect to the closure ordering in the Zariski topology.
For brevity we will write $\mathcal N(F'/F)$ (resp. $\mathcal N_o(F'/F)$) for $\mathcal N(F\to F')$ (resp. $\mathcal N_o(F\to F')$) where $F\to F'$ is a morphism of fields.
For $(F,F')=(\sfk,\kun)$ (resp. $(\sfk,\bark)$, $(\kun,\bark)$), the map $\mathcal N_o(F'/F)$ is strictly increasing.
We will write $\mathcal N$ for the nilpotent cone of the Lie algebra of $G$ and $\mathcal N_o$ for its $\Ad(G)$ orbits.
In this case we also define $\mathcal N_{o,c}$ (resp. $\mathcal N_{o,\bar c}$) to be the set of all pairs $(\OO,C)$ such that $\OO\in \mathcal N_o$ and $C$ is a conjugacy class in the fundamental group $A(\OO)$ of $\OO$ (resp. Lusztig's canonical quotient $\bar A(\OO)$ of $A(\OO)$, see \cite[Section 5]{Sommers2001}). There is a natural map 
\begin{equation}
    \mf Q:\mathcal N_{o,c}\to\mathcal N_{o,\bar c}, \qquad (\OO,C)\mapsto (\OO,\bar C)
\end{equation}
where $\bar C$ is the image of $C$ in $\bar A(\OO)$ under the natural homomorphism $A(\OO)\twoheadrightarrow \bar A(\OO)$. There are also projection maps $\pr_1: \cN_{o,c} \to \cN_o$, $\pr_1: \cN_{o,\bar c} \to \cN_o$. We will typically write $\mathcal N^\vee$, $\mathcal N^\vee_o, \cN^{\vee}_{o,c}$, and $\cN^{\vee}_{o,\bar c}$ for the sets $\mathcal N$, $\mathcal N_o, \cN_{o,c}$, and $\cN_{o,\bar c}$ associated to the Langlands dual group $G^\vee$. When we wish to emphasise the group we are working with we include it as a superscript e.g. $\mathcal N_o^\bfG(k)$.

Recall the following classical result.

\begin{lemma}[Corollary 3.5, \cite{Pommerening} and Theorem 1.5, \cite{Pommerening2}]\label{lem:Noalgclosed}
    Let $F$ be algebraically closed with good characteristic for $\bfG$.
    Then there is canonical isomorphism of partially ordered sets $\Theta_F:\mathcal N_o(F)\xrightarrow{\sim}\mathcal N_o$.
\end{lemma}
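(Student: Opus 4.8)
The plan is to deduce this from the Bala--Carter classification of nilpotent orbits together with Pommerening's extension of it to good characteristic, the point being that this classification is purely root-theoretic: it depends only on the root datum $\mathcal R$, which $\bfG$ and $G$ share, and hence is insensitive to the field $F$.

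First, since $F$ is algebraically closed, $\bfG_F$ is split and is the base change to $F$ of the split Chevalley group scheme over $\mathbb Z$ with root datum $\mathcal R$; good characteristic is a condition on $\mathcal R$ alone, so it holds for $G$ as well (trivially, since $\operatorname{char}\mathbb C=0$). By the Bala--Carter--Pommerening theorem (\cite{Pommerening,Pommerening2}, after Bala--Carter in characteristic zero), $\mathcal N_o(F)$ is in bijection with the set of $\bfG(F)$-conjugacy classes of pairs $(\bfL,\bfP_{\bfL})$, where $\bfL\subseteq\bfG_F$ is a Levi subgroup and $\bfP_{\bfL}$ a distinguished parabolic subgroup of $\bfL$, the orbit attached to such a pair being the $\bfG_F$-saturation of the Richardson orbit of $\bfP_{\bfL}$. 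Now conjugacy classes of Levi subgroups of $\bfG_F$ are parametrized by the $W$-associate classes of subsets of a fixed base of $\Phi$, and the distinguished parabolic subgroups of a given $\bfL$ are picked out, internally to the root datum, by a combinatorial condition (an equality of dimensions read off from $\mathcal R$) on the corresponding subsets of simple roots. Exactly the same combinatorial data parametrizes $\mathcal N_o$; matching the two produces a bijection $\Theta_F:\mathcal N_o(F)\to\mathcal N_o$, and since the auxiliary choices of torus and base only affect the parametrization up to $W$-conjugacy, $\Theta_F$ is canonical.

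It remains to see that $\Theta_F$ respects the closure orders, i.e. that the nilpotent orbit poset is characteristic-independent in good characteristic. I would argue this by specialization. Extension of algebraically closed fields does not change $\mathcal N_o(F)$ as a poset, so we may take $F=\overline{\mathbb F}_p$; choose a complete discrete valuation ring $R$ with residue field $\overline{\mathbb F}_p$ and fraction field of characteristic zero, and spread $\bfG_F$ out to the Chevalley group scheme over $R$ with root datum $\mathcal R$. The nilpotent cone then forms a flat family over $R$; an orbit in the geometric generic fibre degenerates to a union of orbits in the special fibre, but Bala--Carter--Pommerening gives the same finite number of orbits in the two fibres and the orbit dimension is determined by the Bala--Carter datum, so each generic orbit degenerates to a single special orbit, and this correspondence is $\Theta_F$. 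Since the closure of an orbit degenerates into the closure of the corresponding orbit, this yields one inclusion of orders, and comparing dimension functions and covering relations (again computed from the combinatorial data, cf.\ \cite{Pommerening2}) upgrades it to an isomorphism of partially ordered sets. The main obstacle is precisely this last step: the bijection of underlying sets is essentially formal once Bala--Carter--Pommerening is granted, whereas controlling the degenerations in the flat family tightly enough to conclude that no orbit breaks up and that the closure order is unchanged across characteristics is the delicate part. In a complete write-up one either pushes this specialization argument through carefully or simply cites the known characteristic-independence of the nilpotent orbit poset recorded in \cite{Pommerening2} and the subsequent literature.
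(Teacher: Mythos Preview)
The paper does not give a proof of this lemma at all: it is stated as a classical result and attributed directly to Pommerening's papers (Corollary 3.5 of \cite{Pommerening} and Theorem 1.5 of \cite{Pommerening2}), with no argument supplied. So there is nothing in the paper to compare your proposal against beyond the bare citation.

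That said, your outline is a reasonable account of why the result holds. The set-level bijection via Bala--Carter--Pommerening is exactly the right mechanism: the classification by $W$-conjugacy classes of pairs (Levi, distinguished parabolic) is root-datum-theoretic and hence field-independent in good characteristic. Where your write-up is weakest is precisely where you flag it yourself: the preservation of the closure order. The specialization/flat-family sketch you give is not quite an argument as written --- knowing that orbits match up one-to-one with matching dimensions does not by itself force the covering relations to coincide, and ``comparing dimension functions and covering relations'' is doing a lot of unearned work in your last sentence. In practice one either invokes the explicit case-by-case verification (classical types via partition combinatorics, exceptional types via the tables in Spaltenstein and Mizuno), or appeals to the statement recorded in \cite{Pommerening2} as you eventually do. Since the paper itself simply cites the result, your final fallback to the literature is in fact aligned with what the authors do; just be aware that the specialization paragraph preceding it is heuristic rather than a proof.
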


\subsection{Duality for nilpotent orbits}
\label{sec:duality}
Write
\begin{equation}\label{eq:dBV}
d: \cN_0 \to \cN_0^{\vee}, \qquad d: \cN_0^{\vee} \to \cN_0.
\end{equation}
for the \emph{Barbasch-Lusztig-Spaltenstein-Vogan duality maps} (see \cite[Chapter 3]{Spaltenstein} or \cite[Appendix A]{BarbaschVogan1985}). 
Write 
\begin{equation}
    d_S: \cN_{o,c} \twoheadrightarrow \cN^{\vee}_o, \qquad d_S: \cN^{\vee}_{o,c} \twoheadrightarrow \cN_o
\end{equation}
for the duality maps defined by Sommers in \cite[Section 6]{Sommers2001} and 
\begin{equation}
    D: \cN_{o,\bar c} \to \cN^{\vee}_{o,\bar c}, \qquad D: \cN^{\vee}_{o,\bar c} \to \cN_{o,\bar c}
\end{equation}
for the duality maps defined by Achar in (\cite[Section 1]{Acharduality}). 
Since the latter two maps are less well known we give a (slightly non-standard) account of their definitions and basic properties.

The original precursor to the duality map $d$ is the involution on the set of two sided cells of the finite Hecke algebra $\mathcal H$ attached to $G$ with equal parameters apparent in \cite{weylcells}.
Upon composing this involution with the Springer correspondence (and identifying the two Hecke algebras obtained from $G$ and $G^\vee$) one obtains the duality map $d$ \cite[Chapter 3]{Spaltenstein}.
The map $d$ is not an involution, but satisfies $d^3=d$.
The reason is that the 2-sided cells of $\mathcal H$ only biject with $\im(d)$ (the so called special nilpotent orbits) instead of whole of $\mathcal N_o$ and so $d$ is only an involution when restricted to this set.

One drawback of this construction however is that the non-special nilpotent orbits only play a peripheral role.
Here the affine Hecke algebra $\mathcal H_{aff}$ of $G^\vee$ with equal parameters hints at a remedy for the situation.
In particular, the 2-sided cells of $\mathcal H_{aff}$ are in bijection with $\mathcal N_o^\vee$.
Moreover, for any element $s\in T$, the finite Hecke algebra $\mathcal H_s$ associated to connected centraliser $C_G^\circ(s)$ embeds into $\mathcal H_{aff}$.
Along with an induction operation on 2-sided cells defined in \cite[Section 6.5]{cellsiv}, we are now in a position to define $d_S$.
For a pair $(\OO,C)\in \mathcal N_{o,c}$ let $x\in \OO$ and $s$ be a semisimple element of $C_G(x)$ such that $sC_G^\circ(x)$ lies in $C$.
Conjugating $s$ appropriately we may assume that it lies in $T$.
Write $L_s = C_G^\circ(s)$ and $d^{L_s}$ for the duality map $d$ for the complex reductive group $L_s$.
Then
$$d_S(\OO,C) = \Ind d^{L_s}(L_s.x).$$
Here we interpret $d^{L_s}(L_s.x)$ as a 2-sided cell of $\mathcal H_s$ and the resulting induced cell of $\mathcal H_{aff}$ as an elements of $\mathcal N_o^\vee$ using the bijections outlined in this and the preceeding paragraph.
Unravelling these constructions gives the clean construction given in \cite{Sommers2001} using Springer theory, and the resulting map $d_S:\mathcal N_{o,c}\to \mathcal N_o^\vee$ is well-defined and has image the whole of $\mathcal N_o^\vee$.
Moreover, it is clear that $d_S(\OO,1) = d(\OO)$ and so it extends the usual map $d$.

This finally brings us to the definition of $D$.
In defining $d_S$ we obtained a map which is no longer symmetric in target and source.
Achar's duality map rectifies this.
In \cite[Proposition 15]{Sommers2001}, Sommers shows that $d_S$ factors through $\mf Q:\mathcal N_{o,c}\to \mathcal N_{o,\bar c}$, and by \cite[Theorem 1]{Acharduality} we have an embedding
$$\mathcal N_{o,\bar c}\xhookrightarrow{i} \mathcal N_o\times \mathcal N_o^\vee, \quad (\OO,C)\mapsto (\OO,d_S(\OO,C)).$$
There is also of course the corresponding embedding for the dual object
$$\mathcal N^\vee_{o,\bar c}\xhookrightarrow{i^\vee} \mathcal N_o^\vee\times \mathcal N_o$$
and this suggests a very natural candidate for $D$, namely $D=(i^\vee)^{-1}\circ \tilde D \circ i$ where 
$$\tilde D:\mathcal N_o\times \mathcal N_o^\vee \to \mathcal N_o^\vee\times \mathcal N_o, \quad (\OO,\OO^\vee) \mapsto (\OO^\vee,\OO).$$
Indeed, whenever $\tilde D\circ i(\xi) \in \im i^\vee$, this is equivalent to the definition for $D(\xi)$ given by Achar. 
However in general, not all $\xi \in \mathcal N_{o,\bar c}$ enjoy this property.
Let us call $\xi \in \mathcal N_{o,\bar c}$ \emph{special} if $\tilde D\circ i(\xi) \in \im i^\vee$ and let us endow $\mathcal N_o\times \mathcal N_o^\vee$ with the partial order 
$$(\OO_1,\OO_1^\vee)\le (\OO_2,\OO_2^\vee), \text{ if } \OO_1\le \OO_2, \OO_1^\vee\ge\OO_2^\vee$$
(which in turn endows $\mathcal N_{o,\bar c}$ with a partial order via the embedding $i$).
By \cite[Theorem 2.4]{Acharduality}, every $\xi\in \mathcal N_{o,\bar c}$ is dominated by a unique smallest special $\xi'$.
We can then define $D(\xi) = (i^\vee)^{-1}\circ \tilde D\circ i(\xi')$, and it is easily seen that this is equivalent to Achar's definition of $D$.
The resulting map $D:\mathcal N_{o,\bar c}\to \mathcal N_{o,\bar c}^\vee$ is then a duality map in much that same way that $d$ is: it satisfies $D^3 = D$.
It also extends $d_S$ in the sense that $\pr_1\circ D = d_S$.
These results are enough for our purposes, but let us just note that in obvious analogy to the preceding situations one might wonder if $\im D$ is in bijection with the 2-sided cells of some Hecke algebra.
This is to the best of out knowledge still unknown.

\subsection{Nilpotent orbits over $\kun$}
Let $\bfT$ be a maximal $\sfk$-split torus of $\bfG$, $\bfT_1$ be a maximal $\kun$-split torus of $\bfG$ defined over $\sfk$ and containing $\bfT$, and $x_0$ be a special point in $\mathcal A(\bfT_1,\kun)$.
In \cite[Section 2.1.5]{okada2021wavefront} the third-named author constructed a bijection
$$\theta_{x_0,\bfT_1}:\mathcal N_o^{\bfG}(\kun)\xrightarrow{\sim}\mathcal N_{o,c}$$
which should be viewed as an analogoue of the bijection in Lemma \ref{lem:Noalgclosed} for nilpotent orbits over $\kun$.
This bijection enjoys a number of properties summarised in the following theorem.
\begin{theorem}
    \label{lem:paramNoK}
    \cite[Theorem 2.20, Theorem 2.27, Proposition 2.29]{okada2021wavefront}
    The bijection 
    $$\theta_{x_0,\bfT_1}:\mathcal N_o^{\bfG}(\kun)\xrightarrow{\sim}\mathcal N_{o,c}$$
    is natural in $\bfT_1$, equivariant in $x_0$, and makes the following diagram commute:
    \begin{equation}
        \begin{tikzcd}[column sep = large]
            \mathcal N_o^{\bfG}(\kun) \arrow[r,"\theta_{x_0,\bfT_1}"] \arrow[d,"\mathcal N_o(\bark/\kun)",swap] & \mathcal N_{o,c} \arrow[d,"\pr_1"] \\
            \mathcal N_o(\bark) \arrow[r,"\Theta_{\bark}"] & \mathcal N_o.
        \end{tikzcd}
    \end{equation}
\end{theorem}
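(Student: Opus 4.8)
The plan is to peel back the construction of $\theta_{x_0,\bfT_1}$ and reduce each of the three assertions to a statement about the Bruhat--Tits building or about classical nilpotent orbit theory. First I would recall that $\theta_{x_0,\bfT_1}$ rests on DeBacker's Bruhat--Tits parametrization of $\mathcal N_o^{\bfG}(\kun)$ \cite{debacker}: because $\kun$ has algebraically closed residue field, every nilpotent $\bfG(\kun)$-orbit meets the degree-zero piece of the Moy--Prasad filtration of some parahoric, and $\mathcal N_o^{\bfG}(\kun)$ is in bijection with $\bfG(\kun)$-orbits of pairs $(x,\bar{\mathcal O})$, where $x$ lies in the building and $\bar{\mathcal O}$ is a nilpotent orbit in the Lie algebra of the reductive quotient $\mathsf G_x$; after reducing to the case where $\bar{\mathcal O}$ is distinguished in $\mathsf G_x$, these become conjugacy classes of pairs consisting of a facet together with a distinguished orbit in its reductive quotient. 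Fixing the apartment $\mathcal A(\bfT_1,\kun)$ and the special point $x_0$ rigidifies this: it fixes an identification of the apartment with $X_*(\bfT_1)\otimes\mathbb R$, hence a copy of the extended affine Dynkin diagram of $\bfG$ over $\kun$, through which the facet-plus-distinguished-orbit data is matched with pairs $(L,\mathcal O_L)$, $L$ a pseudo-Levi subgroup of $G$ and $\mathcal O_L$ distinguished in $L$ (using good characteristic and Lemma \ref{lem:Noalgclosed} to pass between $\bar{\mathbb F}_q$- and $\mathbb C$-orbits). By the Bala--Carter-type classification underlying \cite[Section 6]{Sommers2001}, these conjugacy classes are precisely the elements of $\mathcal N_{o,c}$; this is $\theta_{x_0,\bfT_1}$, and it simultaneously supplies the bookkeeping for the rest of the argument.

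Next I would treat naturality in $\bfT_1$ and equivariance in $x_0$. Any two maximal $\kun$-split tori of $\bfG$ are conjugate under $\bfG(\kun)$ (the group is split over $\kun$), and the construction above depends only on the $\bfG(\kun)$-conjugacy class of the pair $(x_0,\bfT_1)$; transporting by such an element therefore identifies $\theta_{x_0,\bfT_1}$ with $\theta_{x_0',\bfT_1'}$ compatibly with the standard identifications on both sides, which is the asserted naturality. For the dependence on $x_0$: replacing $x_0$ by another special point of $\mathcal A(\bfT_1,\kun)$ changes the base point used to read off the affine Dynkin diagram, hence permutes the facets and their attached pseudo-Levi subgroups, and I would check that this permutation intertwines with the corresponding action on $\mathcal N_{o,c}$ (this reflects the conjugacy-invariance of Kac's coordinatization of torsion points). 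Neither point is deep; both are finite checks with the affine Dynkin diagram once $\theta_{x_0,\bfT_1}$ has been unwound.

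The real work is the commutative square, i.e.\ the identity $\pr_1\circ\theta_{x_0,\bfT_1} = \Theta_{\bark}\circ\mathcal N_o(\bark/\kun)$. Unwinding the left-hand side, a $\kun$-orbit with datum $(L,\mathcal O_L)$ goes to the $G$-orbit $\OO$ whose intersection with $\mathfrak l$ has dense part $\mathcal O_L$ --- the saturation $\mathrm{Sat}_L^G(\mathcal O_L)$, which is Sommers' Bala--Carter description of $\pr_1$. The right-hand side sends the same orbit to the complex orbit (via Lemma \ref{lem:Noalgclosed}) underlying its base change to $\bark$. So the content is the geometric claim: if $N\in\mathfrak g(\kun)$ is DeBacker-attached to a facet $x$ with $\mathsf G_x$ carrying the datum $(L,\mathcal O_L)$, then $N$ is $\bfG(\bark)$-conjugate into $\mathrm{Sat}_L^G(\mathcal O_L)$. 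I would prove this by choosing a representative of $N$ in the $x$-parahoric whose reduction lies in $\mathcal O_L$, lifting an $\mathfrak{sl}_2$-triple by the Jacobson--Morozov theorem over $\mf O$ (legitimate in good characteristic), obtaining a cocharacter of $\bfG$ that realizes $N$, and then comparing the Bala--Carter datum read off over $\bark$ from this cocharacter with the one read off inside $\mathsf G_x$; the two agree because the grading defined by the cocharacter is compatible with reduction modulo the maximal ideal. The delicate point, which I expect to be the main obstacle, is that DeBacker's attachment is defined through the Moy--Prasad filtration rather than by a distinguished lift, so one must first argue that the $\bark$-orbit of $N$ is independent of the choices, and then rule out its being strictly larger than $\mathrm{Sat}_L^G(\mathcal O_L)$; the latter is forced by a dimension and closure-order count (the source of \cite[Proposition 2.29]{okada2021wavefront}) together with the compatibility of DeBacker's parametrization with unramified descent, and one must also carry through the reduction from general facets to the distinguished case inside DeBacker's scheme rather than assume it from the start.
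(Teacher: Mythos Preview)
The paper does not give a proof of this theorem at all: it is stated as a citation of \cite[Theorem 2.20, Theorem 2.27, Proposition 2.29]{okada2021wavefront} and used as a black box thereafter. There is therefore no argument in the paper to compare your proposal against.

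That said, your outline is a plausible reconstruction of what the cited proofs in \cite{okada2021wavefront} must contain --- DeBacker's parametrization over $\kun$, the passage from facet-plus-distinguished-orbit data to pseudo-Levi data via a choice of special point, and the saturation/Bala--Carter argument for the commutativity of the square. If your intent was to supply an independent proof rather than to reproduce the paper's argument, you should be aware that you are rederiving results external to this paper; in that case the sketch is reasonable in shape, though the ``delicate point'' you flag (independence of the $\bark$-orbit from DeBacker's choices and the exclusion of a strictly larger saturation) is exactly where the real content lies and would need to be written out in full rather than deferred to a dimension count.
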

One important consequence of the equivariance in $x_0$ is that the composition
$$d^{un}:= d_S\circ \theta_{x_0,\bfT_1}:\cN_o(\kun)\to \cN_o^\vee$$
is independent of the choice of $x_0$ \cite[Proposition 2.32]{okada2021wavefront}.
We suppress the $\bfT_1$ from the notation since this choice is implicit from fixing the root data and the dual group.

For $\OO_1,\OO_2\in \mathcal N_o(\kun)$ define $\OO_1\le_A\OO_2$ by
$$\OO_1\le_A \OO_2 \iff \mathcal N_o(\bark/\kun)(\OO_1) \le \mathcal N_o(\bark/\kun)(\OO_2),\text{ and } d^{un}(\OO_1)\ge d^{un}(\OO_2)$$
and let $\sim_A$ denote the equivalence classes of this pre-order.
Recall from the previous section that $\cN_{o,\bar c}$ can be endowed with a partial ordering coming from the embedding $\cN_{o,\bar c}\hookrightarrow \cN_o\times \cN_o^\vee$.
The following proposition parameterises $\cN_o(\kun)/\sim_A$ in terms of $\cN_{o,\bar c}$.
\begin{prop}
    \cite[Theorem 2.33]{okada2021wavefront}
    \label{prop:unramclasses}
    The composition $\mf Q\circ \theta_{x_0,\bfT_1}:\mathcal N_o(\kun)\to \mathcal N_{o,\bar c}$ descends to an isomorphism of partial orders 
    $$\bar\theta:\mathcal N_o(\kun)/\sim_A\to \mathcal N_{o,\bar c}$$
    which does not depend on $x_0$.
\end{prop}

\subsection{Representations with unipotent cuspidal support}\label{subsec:LLC} We briefly recall the classification of irreducible representations with unipotent cuspidal support defined in \cite[\S1.6,\S1.21]{Lu-unip1}. 
If $\mathcal S$ is a subset of $G^\vee$, let
\[Z^1_{G^\vee_{\mathsf{sc}}}(\mathcal S)=\text{ preimage of }G^\vee(\mathcal S)/Z(G^\vee)\text{ under the projection } G^\vee_{\mathsf{sc}}\to G^\vee_{\ad},
\]
and let $A^1(\mathcal S)$ denote the component group of $Z^1_{G^\vee_{\mathsf{sc}}}(\mathcal S)$. 

Write $\Phi(G^\vee)$ for the set of $G^\vee$-orbits (under conjugation) of triples $(s,n,\rho)$ where
\begin{itemize}
    \item $s\in G^\vee$ is semisimple,
    \item $n\in \mathfrak g^\vee$ such that $\operatorname{Ad}(s) n=q n$,
    \item $\rho\in \mathrm{Irr}(A^1_{G^{\vee}}(s,n))$.
\end{itemize}
Without loss of generality, we may assume that $s\in T^\vee$. Note that $n\in\mathfrak g^\vee$ is necessarily nilpotent. The group $G^\vee(s)$ acts with finitely many orbits on the $q$-eigenspace of $\Ad(s)$
$$\mathfrak g_q^\vee=\{x\in\mathfrak g^\vee\mid \operatorname{Ad}(s) x=qx\}$$
In particular, there is a unique open $G^\vee(s)$-orbit in $\mathfrak g_q^\vee$.

Fix an $\mathfrak{sl}(2)$-triple $\{n^-,h^\vee,n\} \subset \fg^{\vee}$ with $h^\vee\in \mathfrak t^\vee_{\mathbb R}$ and set
$$s_0:=sq^{-\frac{h^\vee}{2}}.$$
Then $\operatorname{Ad}(s_0)n=n$.

A parameter $(s,n,\rho)\in \Phi(G^\vee)$ is called \emph{discrete} if $Z_{G^\vee}(s,n)$ does not contain a nontrivial torus.
A discrete parameter is called \emph{cuspidal} if $(u,\rho)$ is a cuspidal pair in $Z_{G^\vee}(s)$ in the sense of Lusztig.

Let $\Pi^{\mathsf{Lus}}(\bfG^\omega(\mathsf k))$ denote the equivalence classes of irreducible $\bfG^\omega(\mathsf k)$-representations with unipotent cuspidal support and
\[\Pi^{\mathsf{Lus}}(\bfG)=\bigsqcup_{\omega\in\Omega_{\mathsf{ad}}} \Pi^{\mathsf{Lus}}(\bfG^\omega(\mathsf k)).
\]
(In this subsection, $\bfG^1(\mathsf k)$ is the split form.)
The following theorem is a combination of several results, namely \cite[Theorems 7.12, 8.2, 8.3]{KL} for $\bfG$ adjoint and Iwahori-spherical representations, \cite[Corollary 6.5]{Lu-unip1} and \cite[Theorem 10.5]{Lu-unip2} for $\bfG$ adjoint and representations with unipotent cuspidal support, \cite[Theorem 3.5.4]{Re-isogeny} for $\bfG$ arbitrary and Iwahori-spherical representations, and \cite{FengOpdamSolleveld2021,FengOpdam2020,Sol-LLC} for $\bfG$ arbitrary and representations with unipotent cuspidal support. See \cite[\S2.3]{AMSol} for a discussion of the compatibility between these  classifications. 

\begin{theorem}[{Deligne-Langlands-Lusztig correspondence}]\label{thm:Langlands} There is a bijection
$$\Phi(G^\vee)\xrightarrow{\sim} \Pi^{\mathsf{Lus}}(\bfG),
\qquad (s,n,\rho)\mapsto X(s,n,\rho),$$
such that
\begin{enumerate}
    \item $X(s,n,\rho)$ is tempered if and only if $s_0\in T_c^\vee$ (in particular, $\overline {G^\vee(s)n}=\mathfrak g_q^\vee$),
    \item $X(s,n,\rho)$ is square integrable (modulo the center) if and only if it is tempered and $(s,n,\rho)$ is discrete.
    \item $X(s,n,\rho)$ is supercuspidal if and only if $(s,n,\rho)$ is a cuspidal parameter.
\item $X(s,n,\rho)\in \Pi^{\mathsf{Lus}}(\bfG^\omega(\mathsf k))$ if and only if $\rho|_{Z(G^\vee)}$ is a multiple of $\zeta_\omega$. 
\end{enumerate}
This bijection satisfies several natural desiderata (including formal degrees, equivariance with respect to tensoring by weakly unramified characters), see \cite[Theorem 1]{Sol-LLC} and \cite[Theorem 2]{FengOpdamSolleveld2021}.
\end{theorem}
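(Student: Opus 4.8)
The plan is to assemble the stated bijection from the existing literature, organizing the argument along two independent reductions: from an arbitrary group $\bfG$ inner to split down to \emph{adjoint split} groups, and from representations with arbitrary unipotent cuspidal support down to the \emph{Iwahori-spherical} case. Once both reductions are in hand, the four listed properties can be read off from the geometry of the affine Hecke algebra modules that appear.

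I would begin with the base case: $\bfG$ adjoint (so that $G^\vee$ is simply connected and $A^1_{G^\vee}(s,n)=A_{G^\vee}(s,n)$) and Iwahori-spherical representations. Here the correspondence is the Deligne--Langlands conjecture proved by Kazhdan--Lusztig \cite{KL}: the Iwahori--Hecke algebra is an affine Hecke algebra with equal parameters, its simple modules are realized in the equivariant $K$-theory of the Steinberg variety of $G^\vee$, and they are thereby parametrized by $G^\vee$-orbits of triples $(s,n,\rho)$ with $\operatorname{Ad}(s)n=qn$ and $\rho\in\operatorname{Irr}A_{G^\vee}(s,n)$. In this geometric picture, temperedness corresponds to $s_0$ lying in the compact torus $T_c^\vee$ (equivalently to the closure condition $\overline{G^\vee(s)n}=\fg_q^\vee$), square-integrability to the parameter being discrete, and---by Lusztig's analysis of unipotent representations---supercuspidality to the pair $(n,\rho)$ being cuspidal in $Z_{G^\vee}(s)$; this yields (1)--(3) in this case.

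Next I would remove the two restrictions. Still for $\bfG$ adjoint, Lusztig's reduction theorems \cite{Lu-unip1,Lu-unip2} identify each Bernstein block of representations with unipotent cuspidal support with the module category of an affine Hecke algebra (now with possibly unequal parameters) attached to a cuspidal local system on a nilpotent orbit in a Levi of $G^\vee$; rerunning the geometric parametrization together with the generalized Springer correspondence again produces triples $(s,n,\rho)$ satisfying (1)--(3). To descend to arbitrary $\bfG$ inner to split, I would invoke Reeder \cite{Re-isogeny} for the Iwahori-spherical case of isogenous groups and Feng--Opdam--Solleveld and Solleveld \cite{FengOpdamSolleveld2021,FengOpdam2020,Sol-LLC} in general; these pass along the isogenies $\bfG_{\mathsf{sc}}\to\bfG\to\bfG_{\ad}$ and across inner forms by Clifford theory together with Galois descent. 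The net effect is that the component group $A_{G^\vee}(s,n)$ is replaced by $A^1_{G^\vee}(s,n)$, and the decomposition of $\Pi^{\mathsf{Lus}}(\bfG)$ over the inner forms indexed by $\Omega_{\ad}$ is dictated by central characters, which gives (4): $X(s,n,\rho)$ lies in $\Pi^{\mathsf{Lus}}(\bfG^\omega(\sfk))$ exactly when $\rho|_{Z(G^\vee)}$ is $\zeta_\omega$-isotypic. Finally I would check that these constructions agree on their overlaps, so that they glue to a single well-defined bijection---the relevant compatibility is addressed in \cite{AMSol}---and that the additional desiderata (formal degrees, equivariance under twisting by weakly unramified characters) hold, as recorded in \cite{Sol-LLC,FengOpdamSolleveld2021}.

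I expect the main obstacle to be bookkeeping rather than any single deep input: one must verify that the module parametrizations used by Kazhdan--Lusztig, Lusztig, Reeder, and Feng--Opdam--Solleveld all induce the \emph{same} map $\Phi(G^\vee)\to\Pi^{\mathsf{Lus}}(\bfG)$, and that properties (1)--(4) are preserved through both reductions. The most delicate part is the non-adjoint and inner-form descent, where the passage from $A_{G^\vee}(s,n)$ to $A^1_{G^\vee}(s,n)$ and the indexing of the forms $\bfG^\omega$ by $\Omega_{\ad}$ must be tracked carefully in order to pin down the central-character statement (4).
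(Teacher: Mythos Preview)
Your proposal is correct and mirrors exactly what the paper does: the paper does not give a proof but simply records this theorem as a combination of results from the literature, citing \cite{KL} for adjoint Iwahori-spherical, \cite{Lu-unip1,Lu-unip2} for adjoint unipotent, \cite{Re-isogeny} for arbitrary Iwahori-spherical, \cite{FengOpdamSolleveld2021,FengOpdam2020,Sol-LLC} for the general case, and \cite{AMSol} for the compatibility of these classifications. Your outline is a faithful expansion of precisely this attribution.
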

We denote by $\Pi^{\mathsf{Lus}}_s(\bfG(\mathsf k))$ the set of irreducible $\bfG(\mathsf k)$-representations $X(s,n,\rho)$ for a fixed $s\in T^\vee$.

\subsection{Aubert-Zelevinsky duality}\label{subsec:AZ} There is an involution $\AZ$ on the Grothendieck group $R(\mathbf{G}(\sfk))$ of smooth $\mathbf{G}(\sfk)$-representation, called \emph{Aubert-Zelevinsky duality} \cite[\S1]{Au}. This involution can defined in the following manner. Let $\mathcal Q$ denote the set of parabolic subgroups of $\bfG$ defined over $\mathsf k$ and containing $\mathbf B$. For every $\mathbf Q\in\mathcal Q$, let $i_{\mathbf Q(\mathsf k)}^{\mathbf{G}(\mathsf k)}$ and $\mathsf{r}_{\mathbf Q(\mathsf k)}^{\mathbf{G}(\mathsf k)}$ denote the normalized parabolic induction and normalized Jacquet functors, respectively. Then $\AZ$ is defined by
\begin{equation}
    \widetilde{\mathsf{AZ}}: R(\mathbf{G}(\mathsf{k})) \to R(\mathbf{G}(\mathsf{k})), \qquad \widetilde{\mathsf{AZ}}(X)=\sum_{\mathbf Q\in \mathcal Q} (-1)^{r_{\mathbf Q}} ~i_{\mathbf Q(\mathsf k)}^{\bfG(\mathsf k)}(\mathsf{r}_{\mathbf Q(\mathsf k)}^{\bfG(\mathsf k)}(X)),
\end{equation}
where $r_{\mathbf Q}$ is the semisimple rank of the reductive quotient of $\mathbf Q$. 
If a class $X \in R(\mathbf{G}(\sfk))$ is irreducible and $X$ occurs as a composition factor of a parabolically induced module $i_{\mathbf Q(\mathsf k)}^{\bfG(\mathsf k)}(\sigma)$, where $\sigma$ is a supercuspidal representation, then, by \cite[Corollaire 3.9]{Au} $(-1)^{r_{\mathbf Q}}\widetilde{\mathsf{AZ}}(X)$ is the class of an irreducible representation, which we denote $\AZ(X)$. 

Moreover, it is known that $\AZ$ is an involution on the set of irreducible representations in any Bernstein component, see \cite[\S3]{Au}, also \cite[\S3.2]{BBK}. In particular, it preserves the 
irreducible representations with unipotent cuspidal support, and in fact, it induces an involution on $\Pi^{\mathsf{Lus}}_s(\bfG(\mathsf k))$, for each $s\in W\backslash T^\vee$. In addition, by \cite[Theorem 1.1]{BM1} for Iwahori-spherical representations and \cite[Theorem 1.2.1]{BarCiu13} in general, $\AZ$ maps unitary representations to unitary representations in the category of representations with unipotent supercuspidal support.

\subsection{Wavefront sets}\label{s:wave}
Let $X$ be an admissible smooth representation of $\bfG(\sfk)$ and let $\Theta_X$ be the character of $X$.
Recall that for each nilpotent orbit $\OO\in \mathcal N_o^{\bfG}(\sfk)$ there is an associated distribution $\mu_\OO$ on $C_c^\infty(\mf g(\sfk))$ called the \emph{nilpotent orbital integral} of $\OO$ \cite{rangarao}.
Write $\hat\mu_\OO$ for the Fourier transform of this distribution. By a result of Harish-Chandra
(\cite{HarishChandra1999}), there are complex numbers $c_{\OO}(X) \in \CC$ such that
\begin{equation}\label{eq:localcharacter}\Theta_{X}(\mathrm{exp}(\xi)) = \sum_{\OO} c_{\OO}(X) \hat{\mu}_{\OO}(\xi)\end{equation}
for $\xi \in \fg(\sfk)$ a regular element in a small neighborhood of $0$. The formula (\ref{eq:localcharacter}) is called the \emph{local character expansion} of $\pi$. There are several important invariants which can be extracted from the local character expansion. The \textit{($p$-adic) wavefront set} of $X$ is defined to be
$$\WF(X) := \max\{\OO \mid  c_{\OO}(X)\ne 0\} \subseteq \mathcal N_o(\sfk).$$
The \emph{geometric wavefront set} of $X$ is defined to be
$$^{\bark}\WF(X) := \max \{\mathcal N_o(\bark/\sfk)(\OO) \mid c_{\OO}(X)\ne 0\} \subseteq \mathcal N_o(\bark),$$
see \cite[p. 1108]{Wald18} (warning: in \cite{Wald18}, this invariant is called simply the `wavefront set' of $X$). In \cite[Section 2.2.3]{okada2021wavefront} the third author has introduced a third type of wavefront set called
the \emph{canonical unramified wavefront set}. This is defined to be
$$\CUWF(X) := \max \{[\mathcal N_o(\kun/\sfk)(\OO)] \mid c_{\OO}(X)\ne 0\} \subseteq \mathcal N_o(\kun)/\sim_A.$$

Recall the parameterisations $\Theta_{\bark}:\cN_o(\bark)\to \cN_o$ and $\bar\theta:\cN_o(\kun)/\sim_A\to \cN_{o,\bar c}$.
We define

$$\hphantom{ }^{\bar{\sfk}}\WF(X,\CC) := \Theta_{\bar \sfk}(\hphantom{ }^{\bar{\sfk}}\WF(X)), \quad \CUWF(X,\CC) := \bar \theta(\hphantom{ }\CUWF(X)).$$

We have the following basic relation between the geometric and canonical unramified wavefront sets.

\begin{prop}
    \cite[Theorem 2.37]{okada2021wavefront}
    \label{prop:cuwf}
    If $\CUWF(X)$ is a singleton then 
    $$^{\bark}\WF(X,\CC) = \pr_1(\CUWF(X,\CC)).$$
\end{prop}

For our main result below, we will need a description of $\underline{\WF}(X,\CC)$ in the case when $X$ is an irreducible representation with unipotent cuspidal support and real infinitesimal character. Such a description is contained in the main result of \cite{CMBOunipotent} (see also \cite[Theorem 1.2.1]{CMBOIspherical} for the Iwahori-spherical case). 

\begin{theorem}\label{thm:WFformula} Let $X=X(s,n,\rho)$ be an irreducible $\bfG(\mathsf k)$-representation with real infinitesimal character and let $\AZ(X) = X(s,n',\rho')$. Then
\begin{enumerate}
    \item $\underline{\WF}(X,\CC)$ is a singleton, and
\[\underline{\WF}(X,\CC) = D(\OO^{\vee}_{\AZ(X)},1),\]
where $\OO^{\vee}_{\AZ(X)}$ is the $G^\vee$-orbit of $n'$.
\item Suppose $X=X(q^{\frac 12 h^\vee},n,\rho)$ where $h^\vee$ is the neutral element of a Lie triple attached to a nilpotent orbit $\OO^\vee\subset \mathfrak g^\vee$. Then
\[D(\OO^{\vee}, 1) \leq \hphantom{ } \underline{\WF}(X,\CC).
\]
\end{enumerate}
\end{theorem}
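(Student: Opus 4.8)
The plan is to obtain both parts from the wavefront-set formula for irreducible representations with unipotent cuspidal support and real infinitesimal character. Part (1) is, after a translation of conventions, the main theorem of \cite{CMBOunipotent} (with \cite[Theorem 1.2.1]{CMBOIspherical} covering the Iwahori-spherical case): that reference computes $\underline{\WF}(X,\CC)=\CUWF(X,\CC)$, establishes that it is a singleton, and identifies it with $D(\OO^\vee_{\AZ(X)},1)$. So for part (1) the work is bookkeeping --- checking that the Achar duality $D$, the equivalence $\sim_A$ on $\cN_o(\kun)$, and the identification $\bar\theta\colon\cN_o(\kun)/\!\sim_A\ \xrightarrow{\sim}\cN_{o,\bar c}$ used there agree with the normalisations fixed in Section~\ref{sec:preliminaries}, and that the singleton claim is compatible with Proposition~\ref{prop:cuwf}. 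The new content is part (2), which I would deduce from part (1).

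First, $\AZ$ preserves the infinitesimal character: by Section~\ref{subsec:AZ} it restricts to an involution of $\Pi^{\mathsf{Lus}}_s(\bfG(\sfk))$ for each $s$. Hence if $X=X(q^{\frac12 h^\vee},n,\rho)$ then $\AZ(X)=X(q^{\frac12 h^\vee},n',\rho')$ with $\Ad(q^{\frac12 h^\vee})n'=qn'$, i.e.\ $n'\in\fg^\vee_q$, and by part (1) applied to $X$ we have $\underline{\WF}(X,\CC)=D(\OO^\vee_{\AZ(X)},1)$ with $\OO^\vee_{\AZ(X)}=G^\vee\!\cdot n'$. It therefore suffices to prove
\[
D(\OO^\vee,1)\ \leq\ D(\OO^\vee_{\AZ(X)},1)\qquad\text{in }\cN_{o,\bar c}.
\]

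The geometric input is the containment $\fg^\vee_q\subseteq\overline{\OO^\vee}$. Writing $\fg^\vee=\bigoplus_i(\fg^\vee)_i$ for the grading by $\operatorname{ad}(h^\vee)$-eigenvalue, one has $\fg^\vee_q=(\fg^\vee)_2$, which lies in the $P$-stable subspace $\mathfrak u=\bigoplus_{i\geq 2}(\fg^\vee)_i$, where $P$ is the parabolic with $\operatorname{Lie}(P)=\bigoplus_{i\geq 0}(\fg^\vee)_i$. Since $e^\vee\in(\fg^\vee)_2$ lies in $\OO^\vee$, and a standard dimension count (using $\dim(\fg^\vee)_1=\dim(\fg^\vee)_{-1}$) gives $\dim(G^\vee\times_P\mathfrak u)=\dim\OO^\vee$, the natural map $G^\vee\times_P\mathfrak u\to\cN^\vee$ has irreducible image of dimension $\dim\OO^\vee$ containing $\OO^\vee$, so its image has closure $\overline{\OO^\vee}$; hence $\fg^\vee_q\subseteq\mathfrak u\subseteq\overline{\OO^\vee}$. (This is classical, and it is exactly here that we use that $h^\vee$ arises from a Jacobson--Morozov triple, not merely that the infinitesimal character is real.) Consequently $\OO^\vee_{\AZ(X)}=G^\vee\!\cdot n'\leq\OO^\vee$.

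It remains to push this through the duality maps. As $d$ is order-reversing, $\OO^\vee_{\AZ(X)}\leq\OO^\vee$ yields $d_S(\OO^\vee_{\AZ(X)},1)=d(\OO^\vee_{\AZ(X)})\geq d(\OO^\vee)=d_S(\OO^\vee,1)$; combined with $\OO^\vee_{\AZ(X)}\leq\OO^\vee$ this is precisely the inequality $(\OO^\vee_{\AZ(X)},1)\leq(\OO^\vee,1)$ in $\cN^\vee_{o,\bar c}$ for the order of Section~\ref{sec:duality}. Finally $D$ is itself order-reversing: in the description recalled in Section~\ref{sec:duality} the order-embeddings $i,i^\vee$ and the passage $\xi\mapsto\xi'$ to the smallest dominating special element are order-preserving, whereas $\tilde D\colon(\OO,\OO^\vee)\mapsto(\OO^\vee,\OO)$ manifestly reverses the twisted product orders; alternatively this is part of \cite{Acharduality}. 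Applying order-reversal to $(\OO^\vee_{\AZ(X)},1)\leq(\OO^\vee,1)$ gives $D(\OO^\vee,1)\leq D(\OO^\vee_{\AZ(X)},1)=\underline{\WF}(X,\CC)$, which is part (2). I expect the only real obstacle to lie in part (1) --- reconciling the bookkeeping of \cite{CMBOunipotent} (Achar duality, $\sim_A$, $\bar\theta$, and the singleton statement) with the conventions used here --- together with pinning down that $D$ reverses the partial order on $\cN_{o,\bar c}$; granted these, the reduction to $\fg^\vee_q\subseteq\overline{\OO^\vee}$ and that containment itself are routine.
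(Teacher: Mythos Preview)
The paper does not give its own proof of this theorem: both parts are quoted as the main result of \cite{CMBOunipotent} (with \cite{CMBOIspherical} in the Iwahori--spherical case), so for part (1) your approach coincides exactly with the paper's.

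For part (2) you do more than the paper. The paper simply cites the bound, whereas you deduce it from part (1) via the chain
\[
n'\in\fg^\vee_q\subseteq\overline{\OO^\vee}\ \Longrightarrow\ \OO^\vee_{\AZ(X)}\le\OO^\vee\ \Longrightarrow\ (\OO^\vee_{\AZ(X)},1)\le(\OO^\vee,1)\ \Longrightarrow\ D(\OO^\vee,1)\le D(\OO^\vee_{\AZ(X)},1).
\]
This argument is correct. The geometric step $\fg^\vee_q\subseteq\overline{\OO^\vee}$ is exactly Lemma~\ref{lem:orbitclosure}(iii) of the paper (stated there as $G^\vee\fg^\vee[2]\subseteq\overline{\OO^\vee}$, citing Kostant and Lusztig), which the paper develops anyway for the proof of Theorem~\ref{thm:main}. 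Your verification that $D$ reverses the Achar order is also fine: on special elements it follows immediately from the fact that $\tilde D$ swaps the two factors in the product order, and the passage $\xi\mapsto\xi'$ to the least dominating special element is monotone for the reason you give. So what you have effectively done is extract the inequality of part (2) from part (1) together with ingredients already present in this paper, rather than import it wholesale from \cite{CMBOunipotent}. This is a genuine (and clean) simplification relative to how the paper packages the result; the only cost is that you must appeal to the order-reversing property of $D$, which the paper never states explicitly but which is implicit in the description of Section~\ref{sec:duality} and in \cite{Acharduality}.
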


\subsection{Basic Arthur packets}\label{subsec:basic}
To formulate the relation between $\AZ$ and Arthur packets, it will be convenient to recall an alternative formulation of the Langlands classification. A \emph{simplified Langlands parameter} is a continuous homomorphism $\widetilde{\varphi}: W_{\sfk} \times \mathrm{SL}(2,\CC) \to G^{\vee}$ such that $\widetilde{\varphi}(W_k)$ consists of semisimple elements and $\widetilde{\varphi}|_{\mathrm{SL}(2,\CC)}$ is algebraic. A simplified Langlands parameter is tempered if the image of $W_k$ is compact (the idea of replacing the Weil-Deligne group $W_k'$ with $W_k \times \mathrm{SL}(2,\CC)$ in the non-Archimedean case was first proposed by Langlands in \cite[p. 209]{Langlands1979}). Although there are good reasons for preferring the Weil-Deligne group formulation, there is a bijection between simplified and honest Langlands parameters, see \cite[p. 278]{Knapp-Langlands}. Similarly, a \emph{simplified Arthur parameter} is a continuous homomorphism $\widetilde{\psi}: W_k \times \mathrm{SL}(2,\CC)_{\mathsf{Lan}} \times \mathrm{SL}(2,\CC)_{\mathsf{Art}} \to G^{\vee}$ such that the restriction of $\widetilde{\psi}$ to $W_k \times \mathrm{SL}(2,\CC)_{\mathsf{Lan}}$ is a tempered simplified Langlands parameter and the restriction of $\widetilde{\psi}$ to $\mathrm{SL}(2,\CC)_{\mathsf{Art}}$ is algebraic. The bijection between simplified and honest Langlands parameters induces a bijection between simplified and honest Arthur parameters. 

If $\widetilde{\psi}$ is a simplified Arthur parameter, we get a second simplified Arthur parameter $\widetilde\psi^t$ by `flipping' the $\mathrm{SL}(2,\CC)$ factors, \cite[p. 390]{Arthur2013}, i.e.
$$\widetilde\psi^t(w,x,y) :=\widetilde \psi(w,y,x),  \qquad w \in W_k, \ x,y \in \mathrm{SL}(2,\CC).$$
Via the bijection between simplified and honest Arthur parameters, the map $\widetilde{\psi} \mapsto \widetilde{\psi}^t$ induces an involution on honest Arthur parameters, which we also denote by $\psi \mapsto \psi^t$. It is expected that
\begin{equation}\label{eq:AZflip}\mathsf{AZ}(\Pi_{\psi}^{\mathsf{Art}}(\mathbf{G}(\mathsf k)) = \Pi_{\psi^t}^{\mathsf{Art}}(\mathbf{G}(\mathsf k)).\end{equation}
In the case when $\mathbf G$ is orthogonal or symplectic, this duality is discussed in \cite[\S7.1]{Arthur2013}. If the Arthur parameter $\psi$ is trivial on the inertia subgroup $I_k$, then the associated Langlands parameter $\phi_{\psi}$ is unramified. So the representations in the Langlands packet $\Pi_{\phi_{\psi}}^{\mathsf{Lan}}(\mathbf{G}(\mathsf k))$, and hence also in the Arthur packet $\Pi_{\psi}^{\mathsf{Art}}(\mathbf{G}(\mathsf k))$, are of unipotent cuspidal support. 

If $\widetilde\psi$ is trivial on $W_k \times \SL(2,\CC)_{\mathsf{Art}}$, then $\Pi_{\psi}^{\mathsf{Art}}(\mathbf{G}(\mathsf k)) = \Pi_{\phi_{\psi}}^{\mathsf{Lan}}(\mathbf{G}(\mathsf k))$ consists of \emph{tempered} representations with real infinitesimal character. So if $\psi$ is basic (cf. Definition \ref{def:unipotentparam}), the desideratum (\ref{eq:AZflip}) suggests the following definition for the associated Arthur packet $\Pi_{\psi}^{\mathsf{Art}}(\mathbf{G}(\sfk))$, see \cite[(7.1.10)]{Arthur2013}.

\begin{definition}
\label{def:basicpacket}
Let $\OO^{\vee}$ be a nilpotent adjoint $G^{\vee}$-orbit and let $\psi_{\OO^{\vee}}$ be the associated basic Arthur parameter (cf. Definition \ref{def:unipotentparam}). Then 
$$\Pi^{\mathsf{Art}}_{\psi_{\OO^{\vee}}}(\mathbf{G}(\mathsf k))  = \{X \in \Pi^{\mathsf{Lus}}_{q^{\frac{1}{2}h^{\vee}}}(\mathbf{G}(\mathsf k)) \mid \AZ(X) \text{ is tempered}\}.$$
\end{definition}

\section{Main results}\label{sec:main-result}

Fix a nilpotent orbit $\OO^{\vee} \subset \cN^{\vee}$ and choose an $\mathfrak{sl}(2)$-triple $(e^{\vee},f^{\vee},h^{\vee})$ with $e^{\vee} \in \OO^{\vee}$.
Recall that if $X = X(s,n,\rho)$, we write $\OO^{\vee}_X$ for the nilpotent adjoint $G^{\vee}$-orbit of $n$.
The semisimple operator $\ad(h^{\vee})$ induces a Lie algebra grading
$$\fg^{\vee} = \bigoplus_{n \in \ZZ}\fg^{\vee}[n], \qquad \fg^{\vee}[n] := \{x \in \fg^{\vee} \mid [h^{\vee},x] = nx\}.$$
Write $L^{\vee}$ for the connected (Levi) subgroup corresponding to the centralizer $\fg^{\vee}_0$ of $h^{\vee}$. If we set $s := q^{\frac{1}{2}h^{\vee}}$, then
$$L^{\vee} = G^{\vee}(s), \qquad \fg^{\vee}[2] = \fg^{\vee}_q$$
where $G^{\vee}(s)$ and $\fg^{\vee}_q$ are as defined in Section \ref{subsec:LLC}. Note that $L^{\vee}$ acts by conjugation on each $\fg^{\vee}[n]$, and in particular on $\fg^{\vee}[2]$.We will need the following well-known facts, see \cite[Section 4]{Kostant1959} or \cite[Prop 4.2]{Lusztigperverse}.

\begin{lemma}\label{lem:orbitclosure}
The following are true:
\begin{itemize}
\item[(i)] $L^{\vee}e^{\vee}$ is an open subset of $\fg^{\vee}[2]$ (and hence the unique open $L^{\vee}$-orbit therein).
\item[(ii)] $L^{\vee}e = \OO^{\vee} \cap \fg^{\vee}[2]$.
\item[(iii)] $G^{\vee}\fg^{\vee}[2] \subseteq \overline{\OO^{\vee}}$.
\end{itemize}
\end{lemma}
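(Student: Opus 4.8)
The plan is to recall the standard $\mathfrak{sl}(2)$-theory of Kostant, Dynkin and Kazhdan--Lusztig for a nilpotent element together with its associated grading. Fix the $\mathfrak{sl}(2)$-triple $(e^\vee,f^\vee,h^\vee)$ with $e^\vee \in \OO^\vee$, inducing the grading $\fg^\vee = \bigoplus_n \fg^\vee[n]$, and write $L^\vee = G^\vee(h^\vee)$ for the centralizer of $h^\vee$, a Levi subgroup with Lie algebra $\fg^\vee_0$.

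\medskip

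For part (i), I would argue that $\dim L^\vee e^\vee = \dim \fg^\vee[2]$, which forces $L^\vee e^\vee$ to be dense (hence open, by homogeneity of the orbit) in the irreducible variety $\fg^\vee[2]$. The dimension count is the classical one: on one hand $\dim L^\vee e^\vee = \dim L^\vee - \dim L^\vee(e^\vee) = \dim \fg^\vee_0 - \dim \fg^\vee_0(e^\vee)$. On the other hand, $\mathfrak{sl}(2)$-representation theory applied to the adjoint action shows that $\ad(e^\vee): \fg^\vee[n] \to \fg^\vee[n+2]$ is injective for $n \le -1$ and surjective for $n \ge -1$; in particular $\fg^\vee(e^\vee) \subseteq \bigoplus_{n \ge 0}\fg^\vee[n]$, and $\dim \fg^\vee_0(e^\vee) = \dim\fg^\vee[0] - \dim\fg^\vee[2]$ since $\ad(e^\vee): \fg^\vee[0] \to \fg^\vee[2]$ is surjective with kernel $\fg^\vee[0] \cap \fg^\vee(e^\vee)$. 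Combining, $\dim L^\vee e^\vee = \dim\fg^\vee[2]$ as desired. Uniqueness of the open orbit is automatic since $\fg^\vee[2]$ is irreducible.

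\medskip

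For part (ii), the inclusion $L^\vee e^\vee \subseteq \OO^\vee \cap \fg^\vee[2]$ is clear since $L^\vee \subseteq G^\vee$ and $e^\vee \in \fg^\vee[2]$. For the reverse inclusion, suppose $x \in \OO^\vee \cap \fg^\vee[2]$. Then $x$ is $G^\vee$-conjugate to $e^\vee$, so $x = g \cdot e^\vee$ for some $g$, and since $x$ lies in the degree-$2$ piece it is a nilpotent element of $\fg^\vee[2]$ whose $G^\vee$-orbit meets $\fg^\vee[2]$; one invokes the fact that a single nilpotent $G^\vee$-orbit $\OO^\vee$ meets $\fg^\vee[2]$ in a single $L^\vee$-orbit, which is the content of the Kazhdan--Lusztig/Lusztig analysis of the slice (e.g. \cite[Prop 4.2]{Lusztigperverse}): the key point is that any such $x$ can be completed to an $\mathfrak{sl}(2)$-triple $(x,h^\vee,y)$ with the \emph{same} neutral element $h^\vee$ (using that $\ad(h^\vee)$ acts on $x$ with eigenvalue $2$ and a Morozov-type argument inside the parabolic $\bigoplus_{n\ge 0}\fg^\vee[n]$), whence by Kostant's conjugacy theorem for $\mathfrak{sl}(2)$-triples with fixed semisimple element, $x$ is conjugate to $e^\vee$ by an element of $G^\vee(h^\vee) = L^\vee$.

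\medskip

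For part (iii), I would use that $\overline{\OO^\vee}$ is $G^\vee$-stable and closed, so it suffices to show $\fg^\vee[2] \subseteq \overline{\OO^\vee}$. Since by (i) the orbit $L^\vee e^\vee$ is dense in $\fg^\vee[2]$, and $L^\vee e^\vee \subseteq \OO^\vee$ by (ii), we get $\fg^\vee[2] = \overline{L^\vee e^\vee} \subseteq \overline{\OO^\vee}$ (closure taken in $\fg^\vee$; the closure of $L^\vee e^\vee$ in the subspace $\fg^\vee[2]$ agrees with its closure in $\fg^\vee$ since $\fg^\vee[2]$ is closed). Applying $G^\vee$ and using $G^\vee$-stability of $\overline{\OO^\vee}$ gives $G^\vee\fg^\vee[2] \subseteq \overline{\OO^\vee}$. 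The only genuinely non-formal step is the conjugacy statement underlying (ii); everything else is bookkeeping with the $\mathfrak{sl}(2)$-grading, and I expect that to be where a careful writeup should cite Kostant \cite[Section 4]{Kostant1959} or \cite[Prop 4.2]{Lusztigperverse} rather than reprove it.
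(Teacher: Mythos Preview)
The paper does not give its own proof of this lemma; it simply records it as ``well-known'' and cites \cite[Section 4]{Kostant1959} and \cite[Prop 4.2]{Lusztigperverse}. Your sketch is correct, identifies the same references for the nontrivial content, and derives (i) and (iii) exactly as one would expect, so there is essentially nothing to compare.

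One small comment on your argument for (ii): the step ``complete $x$ to an $\mathfrak{sl}(2)$-triple $(x,h^\vee,y)$ with the \emph{same} $h^\vee$'' does not follow from $[h^\vee,x]=2x$ alone (for instance take $x=E_{12}$ and $h=\mathrm{diag}(2,0,-2)$ in $\mathfrak{sl}_3$); it genuinely requires the hypothesis $x\in\OO^\vee$, which your parenthetical ``Morozov-type argument inside the parabolic'' does not visibly use. A more direct route, which is closer to what the cited references actually do, is purely dimensional: the hypothesis $x\in\OO^\vee$ gives $\dim\fg^\vee(x)=\dim\fg^\vee(e^\vee)$, and since $\fg^\vee(x)=\bigoplus_n\ker\bigl(\ad(x)\colon\fg^\vee[n]\to\fg^\vee[n+2]\bigr)$ this forces each of these maps to have the maximal possible rank; in particular $\ad(x)\colon\fg^\vee[0]\to\fg^\vee[2]$ is surjective, so $L^\vee x$ is open in $\fg^\vee[2]$ and hence equals $L^\vee e^\vee$ by (i). Your instinct to defer the hard step to Kostant and Lusztig is therefore exactly right; just be aware that the completion-to-a-triple phrasing hides rather than isolates the point where $x\in\OO^\vee$ enters.
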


\begin{lemma}
    \label{lem:tempclassification}
    Let $X\in \Pi^{\mathsf{Lus}}_{q^{\frac{1}{2}h^{\vee}}}(\mathbf{G}(\mathsf{k}))$.
    Then $X$ is tempered if and only if $\OO^\vee_X = \OO^\vee$.
\end{lemma}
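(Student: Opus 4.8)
The plan is to characterize temperedness via Theorem \ref{thm:Langlands}(1), which says $X = X(s,n,\rho)$ with $s = q^{\frac{1}{2}h^{\vee}}$ is tempered if and only if $s_0 \in T_c^{\vee}$, equivalently $\overline{G^{\vee}(s)n} = \fg^{\vee}_q$, i.e. $n$ lies in the open $G^{\vee}(s)$-orbit on $\fg^{\vee}_q = \fg^{\vee}[2]$. So the statement reduces to: for $X \in \Pi^{\mathsf{Lus}}_{q^{\frac12 h^{\vee}}}(\mathbf{G}(\sfk))$, one has $n$ in the open $L^{\vee}$-orbit of $\fg^{\vee}[2]$ if and only if the $G^{\vee}$-saturation $\OO^{\vee}_X = G^{\vee}n$ equals $\OO^{\vee}$.

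First I would record the setup: with $h^{\vee}$ the neutral element of a Lie triple for $\OO^{\vee}$ and $s = q^{\frac12 h^{\vee}}$, Section \ref{sec:main-result} already identifies $L^{\vee} = G^{\vee}(s)$ and $\fg^{\vee}[2] = \fg^{\vee}_q$. Now Lemma \ref{lem:orbitclosure} does essentially all the geometric work: part (i) says $L^{\vee}e^{\vee}$ is the unique open $L^{\vee}$-orbit in $\fg^{\vee}[2]$, and part (ii) says $L^{\vee}e^{\vee} = \OO^{\vee} \cap \fg^{\vee}[2]$.

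For the forward direction, suppose $X$ is tempered. Then by Theorem \ref{thm:Langlands}(1), $n$ lies in the open $G^{\vee}(s)$-orbit of $\fg^{\vee}_q = \fg^{\vee}[2]$, which by Lemma \ref{lem:orbitclosure}(i) is $L^{\vee}e^{\vee}$; hence $n \in L^{\vee}e^{\vee} \subseteq \OO^{\vee}$ by (ii), so $\OO^{\vee}_X = G^{\vee}n = \OO^{\vee}$ since $n$ is already in the orbit $\OO^{\vee}$ of $e^{\vee}$. Conversely, suppose $\OO^{\vee}_X = \OO^{\vee}$. Then $n \in \OO^{\vee} \cap \fg^{\vee}[2] = L^{\vee}e^{\vee}$ by Lemma \ref{lem:orbitclosure}(ii), which is the unique open $L^{\vee}$-orbit in $\fg^{\vee}[2] = \fg^{\vee}_q$ by (i); hence $\overline{G^{\vee}(s)n} = \overline{L^{\vee}e^{\vee}} = \fg^{\vee}_q$, so the criterion of Theorem \ref{thm:Langlands}(1) is met and $X$ is tempered.

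I do not anticipate a serious obstacle here — the content is entirely in the already-quoted Theorem \ref{thm:Langlands}(1) and Lemma \ref{lem:orbitclosure}, and the proof is a matter of threading these together. The one point requiring a word of care is the implicit compatibility between the $\mathfrak{sl}(2)$-triple $\{n^-, h^{\vee}, n\}$ built from $n$ in the definition of $\Phi(G^{\vee})$ (Section \ref{subsec:LLC}) and the fixed triple $(e^{\vee}, f^{\vee}, h^{\vee})$ for $\OO^{\vee}$: one must note that having $s = q^{\frac12 h^{\vee}}$ forces the semisimple part $h^{\vee}$ of any triple through $n$ to be $L^{\vee}$-conjugate to the chosen one (since $\operatorname{Ad}(s)n = qn$ pins down the grading element up to the centralizer), so that $n \in \fg^{\vee}[2]$ is legitimate and Lemma \ref{lem:orbitclosure} applies verbatim. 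This is standard $\mathfrak{sl}(2)$-theory and I would dispatch it in one sentence.
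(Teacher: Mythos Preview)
Your proposal is correct and follows essentially the same route as the paper: both use Theorem~\ref{thm:Langlands}(1) to reduce temperedness to $n$ lying in the open $L^{\vee}$-orbit on $\fg^{\vee}[2]$, and then invoke Lemma~\ref{lem:orbitclosure}(i),(ii) to identify that orbit with $\OO^{\vee}\cap\fg^{\vee}[2]$. Your final paragraph is unnecessary, since $n\in\fg^{\vee}[2]$ is immediate from the defining condition $\operatorname{Ad}(s)n=qn$ with $s=q^{\frac12 h^{\vee}}$; the paper instead records the one-line observation $s_0=1\in T_c^{\vee}$ to justify passing directly to the orbit-closure form of the temperedness criterion.
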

\begin{proof}
Let $X = X(q^{\frac{1}{2}h^{\vee}},n,\rho)$ for $n \in \fg^{\vee}[2]$ and $\rho \in \mathrm{Irr}(A(q^{\frac{1}{2}h^{\vee}},n))$. Note that $s_0 = 1 \in T_c^{\vee}$. So
\begin{align*}
    X \text{ is tempered} &\iff \overline{L^{\vee}n} = g^{\vee}[2] &&\text{(Theorem \ref{thm:Langlands}(i))}\\
    &\iff n \in L^{\vee}e^{\vee}  &&\text{(Lemma \ref{lem:orbitclosure}(i))}\\
    &\iff n \in \OO^{\vee}  &&\text{(Lemma \ref{lem:orbitclosure}(ii))}\\
    &\iff \OO^{\vee}_X = \OO^{\vee}.
\end{align*}
\end{proof}

\begin{theorem}\label{thm:main}
Let $\OO^{\vee}$ be a nilpotent adjoint $G^{\vee}$-orbit and let $\psi_{\OO^{\vee}}$ be the associated basic Arthur parameter (cf. Definition \ref{def:unipotentparam}). Then
    \begin{equation}\label{eq:Arthurpacket}\Pi^{\mathsf{Art}}_{\psi_{\OO^{\vee}}}(\mathbf{G}(\sfk)) = \{X \in \Pi^{\mathsf{Lus}}_{q^{\frac{1}{2}h^{\vee}}}(\mathbf{G}(\mathsf{k})) \mid \ \CUWF(X,\CC) \leq D(\OO^{\vee},1)\}.\end{equation}
\end{theorem}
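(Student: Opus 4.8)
The plan is to prove the two inclusions separately, using Definition \ref{def:basicpacket} as the known description of $\Pi^{\mathsf{Art}}_{\psi_{\OO^\vee}}(\mathbf G(\sfk))$, i.e.\ that it consists of those $X\in\Pi^{\mathsf{Lus}}_{q^{\frac12 h^\vee}}(\mathbf G(\sfk))$ with $\AZ(X)$ tempered. The crucial inputs are the wavefront set formula of Theorem \ref{thm:WFformula}, the temperedness criterion of Lemma \ref{lem:tempclassification}, and the fact that $D^3=D$ together with $\pr_1\circ D=d_S$ from Section \ref{sec:duality}.

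First I would prove $\subseteq$. Let $X\in\Pi^{\mathsf{Art}}_{\psi_{\OO^\vee}}(\mathbf G(\sfk))$, so $\AZ(X)$ is tempered and lies in $\Pi^{\mathsf{Lus}}_{q^{\frac12 h^\vee}}(\mathbf G(\sfk))$ (recall $\AZ$ preserves the set $\Pi^{\mathsf{Lus}}_s$). By Lemma \ref{lem:tempclassification} applied to $\AZ(X)$, we get $\OO^\vee_{\AZ(X)}=\OO^\vee$. Now Theorem \ref{thm:WFformula}(1) gives $\CUWF(X,\CC)=D(\OO^\vee_{\AZ(X)},1)=D(\OO^\vee,1)$, which certainly satisfies $\CUWF(X,\CC)\le D(\OO^\vee,1)$. (Here I am using that the infinitesimal character $q^{\frac12 h^\vee}$ is real, so Theorem \ref{thm:WFformula} applies, and that $\CUWF(X,\CC)$ is a singleton in this case.) This direction is essentially immediate.

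The reverse inclusion $\supseteq$ is the substantive part. Let $X\in\Pi^{\mathsf{Lus}}_{q^{\frac12 h^\vee}}(\mathbf G(\sfk))$ with $\CUWF(X,\CC)\le D(\OO^\vee,1)$; I must show $\AZ(X)$ is tempered, equivalently (Lemma \ref{lem:tempclassification}) that $\OO^\vee_{\AZ(X)}=\OO^\vee$. Write $\AZ(X)=X(q^{\frac12 h^\vee},n',\rho')$ and $\OO^\vee_{\AZ(X)}=G^\vee\cdot n'$; since $n'\in\fg^\vee[2]$, Lemma \ref{lem:orbitclosure}(ii)--(iii) forces $\OO^\vee_{\AZ(X)}\le\OO^\vee$ in the closure order. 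Combining this with Theorem \ref{thm:WFformula}(1), $\CUWF(X,\CC)=D(\OO^\vee_{\AZ(X)},1)$, and the monotonicity properties of $D$ (which reverses the order on $\cN_{o,\bar c}$, so that $\OO^\vee_{\AZ(X)}\le\OO^\vee$ gives a corresponding inequality on the dual side), together with the hypothesis $\CUWF(X,\CC)\le D(\OO^\vee,1)$, I want to squeeze $\OO^\vee_{\AZ(X)}$ to equal $\OO^\vee$. Concretely: from $\OO^\vee_{\AZ(X)}\le \OO^\vee$ and order-reversal one gets $D(\OO^\vee,1)\le D(\OO^\vee_{\AZ(X)},1)=\CUWF(X,\CC)\le D(\OO^\vee,1)$, hence $D(\OO^\vee_{\AZ(X)},1)=D(\OO^\vee,1)$. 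This alone does not yet give $\OO^\vee_{\AZ(X)}=\OO^\vee$ because $D$ is not injective; here is where I expect the main obstacle. To upgrade the equality of $D$-values to equality of orbits I would use $\pr_1\circ D=d_S$, which reduces matters to showing $d_S(\OO^\vee_{\AZ(X)},1)=d_S(\OO^\vee,1)$, i.e.\ $d(\OO^\vee_{\AZ(X)})=d(\OO^\vee)$, and then invoke a further constraint forcing $\OO^\vee_{\AZ(X)}$ to be special (or at least that $\OO^\vee$ lies in a suitable position relative to it). The lower bound Theorem \ref{thm:WFformula}(2), $D(\OO^\vee,1)\le\CUWF(\AZ(X),\CC)$ applied to $\AZ(X)$ with its parameter — wait, more usefully, one applies Theorem \ref{thm:WFformula}(2) directly, using that $X$ itself has parameter $X(q^{\frac12 h^\vee},n,\rho)$, giving $D(\OO^\vee,1)\le\CUWF(X,\CC)$, which combined with the hypothesis yields $\CUWF(X,\CC)=D(\OO^\vee,1)$ exactly. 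Then $D(\OO^\vee_{\AZ(X)},1)=D(\OO^\vee,1)$ as above, and I would finish by showing that among all $\OO'^\vee\le\OO^\vee$ with $D(\OO'^\vee,1)=D(\OO^\vee,1)$, consistency with the $\AZ$-involution (apply the whole argument to $\AZ(X)$ in place of $X$, whose wavefront set is $D(\OO^\vee_{\AZ(X)},1)$ and whose associated orbit must dominate things symmetrically) forces $\OO'^\vee=\OO^\vee$.

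The delicate point, and the one I would spend the most care on, is exactly this last injectivity-type step: translating the equality $D(\OO^\vee_{\AZ(X)},1)=D(\OO^\vee,1)$ of elements of $\cN^\vee_{o,\bar c}$ into the equality $\OO^\vee_{\AZ(X)}=\OO^\vee$ of nilpotent orbits, using the a priori bound $\OO^\vee_{\AZ(X)}\le\OO^\vee$ coming from Lemma \ref{lem:orbitclosure} and possibly a genericity/specialness argument about which orbits can arise as $\OO^\vee_{\AZ(X)}$ for $X$ with the given infinitesimal character. Everything else is bookkeeping with the classification of Theorem \ref{thm:Langlands}, the behaviour of $\AZ$ on $\Pi^{\mathsf{Lus}}_s$, and the formal properties ($D^3=D$, $\pr_1\circ D=d_S$, order-reversal) recorded in Section \ref{sec:duality}.
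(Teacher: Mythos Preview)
Your approach is essentially the same as the paper's. Both arguments use Theorem~\ref{thm:WFformula}(2) to get the lower bound $D(\OO^\vee,1)\le\CUWF(X,\CC)$ for every $X\in\Pi^{\mathsf{Lus}}_{q^{\frac12 h^\vee}}(\mathbf G(\sfk))$, so that the inequality in the hypothesis is automatically an equality; both then use Theorem~\ref{thm:WFformula}(1) and Lemma~\ref{lem:tempclassification} to reduce the statement to the equivalence
\[
\OO^\vee_{\AZ(X)}=\OO^\vee\quad\Longleftrightarrow\quad D(\OO^\vee_{\AZ(X)},1)=D(\OO^\vee,1).
\]
You correctly isolate this ``injectivity-type step'' as the only nontrivial remaining point.

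The gap is that you do not actually close this step. The paper handles it in one line by citing \cite[Lemma~2.1.2]{CMBOIspherical}, which gives precisely that the map $\OO^\vee\mapsto D(\OO^\vee,1)$ is injective on $\cN^\vee_o$ (equivalently, $d_S\circ D(\OO^\vee,1)=\OO^\vee$). Your proposed workarounds do not succeed: passing to $\pr_1\circ D=d_S$ only yields $d(\OO^\vee_{\AZ(X)})=d(\OO^\vee)$, and $d$ is genuinely non-injective (orbits in the same special piece collapse); the ``apply the whole argument to $\AZ(X)$'' idea fails because $\AZ(X)$ has no reason to satisfy the wavefront bound $\CUWF(\AZ(X),\CC)\le D(\OO^\vee,1)$; and the appeal to order-reversal of $D$ is not among the properties recorded in Section~\ref{sec:duality} and in any case would only reproduce the equality you already have. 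So the missing ingredient is exactly the external injectivity lemma; once you invoke it, your argument and the paper's are identical.
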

\begin{proof}
Let $X\in \Pi^{\mathsf{Lus}}_{q^{\frac{1}{2}h^{\vee}}}(\mathbf{G}(\mathsf{k}))$.
By Theorem \ref{thm:WFformula}, we have the bound
$$D(\OO^\vee,1)\le \CUWF(X,\CC).$$
Thus 
\begin{equation}
    \label{eq:bound}
    \CUWF(X,\CC) \le D(\OO^\vee,1) \iff \CUWF(X,\CC) = D(\OO^\vee,1).
\end{equation}
But by Theorem \ref{thm:WFformula} we also have 
\begin{equation}
    \label{eq:wf}
    \CUWF(X,\CC) = D(\OO^\vee_{\AZ(X)},1).
\end{equation}
Therefore
\begin{align*}
    X\in \Pi^{\mathsf{Art}}_{\OO^{\vee}}(\mathbf{G}(\mathsf{k})) &\iff \AZ(X) \text{ is tempered}  &&\text{(Definition \ref{def:basicpacket})} \\
    &\iff \OO^\vee_{\AZ(X)} = \OO^\vee  &&\text{(Lemma \ref{lem:tempclassification})}\\
    &\iff D(\OO^\vee_{\AZ(X)},1)=D(\OO^\vee,1)  &&\text{(\cite[Lemma 2.1.2]{CMBOIspherical})}\\
    &\iff \CUWF(X,\CC)=D(\OO^\vee,1)  &&\text{(Equation \ref{eq:wf})}\\
    &\iff \CUWF(X,\CC)\le D(\OO^\vee,1)  &&\text{(Equation \ref{eq:bound})}.
\end{align*}
\end{proof}

There is a well-known conjecture regarding the (geometric) wavefront sets of the constituents of an Arthur packet. The following formulation essentially appears in \cite[Conjecture 3.2]{jiangliu} (in \cite{jiangliu} it is stated only for classical groups. The version below is the natural generalization to arbitrary groups).

\begin{conj}[{cf. \cite[Conjecture 3.2]{jiangliu}}]\label{conj:jiangliu}
Let $\psi$ be an Arthur parameter and let $\OO^{\vee}_{\psi}$ be the nilpotent $G^{\vee}$-orbit corresponding to the restriction of $\psi$ to $\mathrm{SL}(2,\CC)$. Then
\begin{itemize}
    \item[(i)] For every $X \in \Pi^{\mathsf{Art}}_{\psi}(\mathbf{G}(\sfk))$, there is a bound
    $$\hphantom{ }^{\bark}\WF(X,\CC) \leq d(\OO^{\vee}_{\psi}).$$
    \item[(ii)] The bound in (i) is achieved for some $X \in \Pi^{\mathsf{Art}}_{\psi}(\mathbf{G}(\sfk))$.
\end{itemize}
\end{conj}

We note that a global version of this conjecture appears in \cite{Shahidi90}. An immediate consequence of Theorem \ref{thm:main} is that Conjecture \ref{conj:jiangliu} holds for basic Arthur packets. In fact, something stronger is true for this class of packets.

\begin{cor}\label{cor:jiang}
Let $\OO^{\vee}$ be a nilpotent adjoint $G^{\vee}$-orbit and let $\psi_{\OO^{\vee}}$ be the associated Arthur parameter. Then for every $X \in \Pi^{\mathsf{Art}}_{\psi_{\OO^{\vee}}}(\mathbf{G}(\sfk))$, there is an equality
$$\hphantom{ }^{\bark}\WF(X,\CC) = d(\OO^{\vee}).$$
\end{cor}
\begin{proof}
    Let $X \in \Pi^{\mathsf{Art}}_{\psi_{\OO^{\vee}}}(\mathbf{G}(\sfk))$.
    By Theorem \ref{thm:main} we have that $\CUWF(X,\CC) = D(\OO^\vee_\psi,1)$.
    Recall from the discussion in Section \ref{sec:duality} that $\pr_1\circ D = d_S$ and that $d_S(\OO^\vee,1) = d(\OO^\vee)$.
    Moreover, by Proposition \ref{prop:cuwf}, we have that $^{\bark}\WF(X,\CC) = \pr_1(\CUWF(X,\CC))$.
    Therefore 
    $$^{\bark}\WF(X,\CC) = \pr_1(\CUWF(X,\CC)) = \pr_1(D(\OO^\vee,1))= d(\OO^\vee).$$
\end{proof}

\subsection{Weak Arthur packets} 

Let $\OO^{\vee}$ be a nilpotent adjoint $G^{\vee}$-orbit and let $\psi_{\OO^{\vee}}$ be the associated Arthur parameter. Theorem \ref{thm:main} gives a characterization of the associated Arthur packet $\Pi^{\mathsf{Art}}_{\OO^\vee}(\bfG(\mathsf k))$ in terms of the \emph{canonical unramified wavefront set} $\underline{\WF}(X)$. If we replace $\underline{\WF}(X)$ with the coarser invariant $\hphantom{ }^{\bar{\sfk}}\WF(X)$, we get a set which we will denote by $\Pi_{\psi_{\OO^{\vee}}}^{\mathsf{Weak}}(\mathbf{G}(\sfk))$
\begin{equation}\label{eq:weakpacket}
\Pi_{\psi_{\OO^{\vee}}}^{\mathsf{Weak}}(\mathbf{G}(\sfk)) := \{X = X(q^{\frac{1}{2}h^{\vee}},n,\rho) \mid \hphantom{ }^{\bar{\sfk}}\WF(X) \leq d(\OO^{\vee})\}.
\end{equation}
Because of the compatibility between $\underline{\WF}(X)$ and $\hphantom{ }^{\bar{\sfk}}\WF(X)$, there is a containment
$$\Pi_{\psi_{\OO^{\vee}}}^{\mathsf{Art}}(\mathbf{G}(\sfk)) \subseteq \Pi_{\psi_{\OO^{\vee}}}^{\mathsf{Weak}}(\mathbf{G}(\sfk)).$$
In contrast to the set $\Pi^{\mathsf{Art}}_{\psi_{\OO^\vee}}(\bfG(\mathsf k))$, the set $\Pi_{\psi_{\OO^{\vee}}}^{\mathsf{Weak}}(\mathbf{G}(\sfk))$ is \emph{not} the $\AZ$-dual of a tempered Arthur packet, nor is it parameterized (in any simple way) by representations of $A(\OO^{\vee})$. So it is not a reasonable candidate for an Arthur packet. By analogy with the case of real reductive groups (see \cite[Chapter 27]{AdamsBarbaschVogan}), we call $\Pi_{\psi_{\OO^{\vee}}}^{\mathsf{Weak}}(\mathbf{G}(\sfk))$ the \emph{weak Arthur packet} attached to $\OO^{\vee}$. 

{Using Theorem \ref{thm:WFformula} and Proposition \ref{prop:cuwf}, it is easy to describe the constituents of $\Pi_{\OO^{\vee}}^{\mathsf{Weak}}(\mathbf{G}(\sfk))$.

\begin{cor}\label{c:weak} 
The weak Arthur packet $\Pi_{\OO^{\vee}}^{\mathsf{Weak}}(\mathbf{G}(\sfk))$ is the set of irreducible representations $\mathsf{AZ}(X(q^{\frac{1}{2}h^{\vee}},n,\rho))$, where $n$ belongs to the special piece (in the sense of \cite{Spaltenstein}) of $\OO^\vee.$
\end{cor}
}

Guided by the case of real reductive groups, we conjecture that $\Pi_{\OO^{\vee}}^{\mathsf{Weak}}(\mathbf{G}(\sfk))$ is a union of Arthur packets. Recall the notation of Section \ref{subsec:basic}.

\begin{conj}\label{conj:weakpacket}
The weak Arthur packet $\Pi_{\psi_{\OO^{\vee}}}^{\mathsf{Weak}}(\mathbf{G}(\sfk))$ is a (possibly non-disjoint) union of several Arthur packets 
\begin{equation}\label{eq:unionpackets}\Pi_{\psi_{\OO^{\vee}}}^{\mathsf{Weak}}(\mathbf{G}(\sfk)) = \bigcup_{i=1}^n \Pi_{\widetilde{\psi}_i}^{\mathsf{Art}}(\mathbf{G}(\sfk)),\end{equation}
corresponding to a collection of simplified Arthur parameters
\begin{equation}\{\widetilde{\psi}_i: W_{\mathsf k} \times \mathrm{SL}(2,\CC)_{\mathsf{Lan}} \times \mathrm{SL}(2,\CC)_{\mathsf{Art}} \to G^{\vee} \mid \ 1 \leq i \leq n\},\end{equation}
including the anti-tempered parameter $\widetilde{\psi}_{\OO^{\vee}}$, but also several others. Each $\widetilde{\psi}_i$ is trivial on the Weil group $W_k$ and thus corresponds to an algebraic homomorphism
$$\widetilde{\psi}_i: \mathrm{SL}(2,\CC)_{\mathrm{Lan}} \times \mathrm{SL}(2,\CC)_{\mathsf{Art}} \to G^{\vee}$$
Via the Jacobson-Morozov theorem, each such $\widetilde{\psi}_i$ can be identified with a pair of nilpotent adjoint $G^{\vee}$-orbits
\begin{equation}\label{eq:commutingorbits}(\OO^{\vee}_{i,\mathsf{Art}},\OO^{\vee}_{i,\mathsf{Lan}}),\end{equation}
and a pair of semisimple elements in a single fixed Cartan subalgebra of $\fg^{\vee}$
\begin{equation}\label{eq:commuting semisimple}(h^{\vee}_{i,\mathsf{Art}}, h^{\vee}_{i,\mathsf{Lan}}).\end{equation}
These elements should satisfy
\begin{equation}\label{eq:inflcharsum}\frac{1}{2}h^{\vee} \in G^{\vee}(\frac{1}{2}h^{\vee}_{i,\mathsf{Art}} + \frac{1}{2}h^{\vee}_{i,\mathsf{Lan}}).\end{equation}
\end{conj}

One might be tempted to guess that the set $\{\widetilde{\psi}_i\}$ of parameters appearing in Conjecture \ref{conj:weakpacket} consists of \emph{all} possible parameters satisfying the infinitesimal character condition (\ref{eq:inflcharsum}). But nothing quite so simple is true. The tempered Arthur packet 
corresponding to the pair $(\OO^{\vee}_{\mathsf{Art}},\OO^{\vee}_{\mathsf{Lan}}) = (0, \OO^{\vee})$ is almost never contained in $\Pi_{\OO^{\vee}}^{\mathsf{Weak}}(\mathbf{G}(\sfk))$ (see Section \ref{sec:example} for a detailed example). Thus, the problem of describing the Arthur packets in $\Pi_{\OO^{\vee}}^{\mathsf{Weak}}(\mathbf{G}(\sfk))$ seems interesting and nontrivial. We note that Conjecture \ref{conj:weakpacket} together with Corollary \ref{c:weak} implies the following conjecture (noting that $\mathsf{AZ}$ preserves unitarity):

\begin{conj}
Let $(q^{\frac{1}{2}h^{\vee}},n,\rho)$ be a Deligne-Langlands-Lusztig parameter such that $n$ belongs to the special piece of $\OO^{\vee}$. Then the irreducible representation $X(q^{\frac 12 h^\vee},n,\rho)$ is unitary.
\end{conj}

\section{Example (split $F_4$)}\label{sec:example}

Let $\mathbf{G}(\sfk)$ be the split form of the (unique) simple exceptional group of type $F_4$, and let $\OO^{\vee}=F_4(a_3)$, the minimal distinguished orbit. There are 20 representations $X_1,...,X_{20}$ in  $\Pi^{\mathsf{Lan}}_{q^{\frac{1}{2}h^{\vee}}}(\mathbf{G}(\sfk))$, enumerated in \cite[Section 5]{CiDirac}. All but one of these representations (namely, the representation labeled $X_5$) are Iwahori-spherical. For each $X_k = X(q^{\frac{1}{2}h^{\vee}},n,\rho)$ we record in Table \ref{table:f4a3} below the nilpotent orbit $G^{\vee}n \subset \cN^{\vee}$ and the representation $\rho$ of $A(s,n)$ (in all cases, $A(s,n)$ is a symmetric group, so $\rho$ corresponds to a partition). We also record $\AZ(X_k)$ and indicate whether $X_k$ is unitary. For these calculations, we refer the reader to \cite[Proposition 5.7]{CiDirac}. Using (i) of Theorem \ref{thm:Langlands}, it is easy to determine which representations are tempered (there are 5 such representations, labeled $X_1$,...,$X_5$). Finally, we compute the canonical unramified wavefront set $\CUWF(X_k)$. 
We have by Theorem \ref{thm:WFformula} that
\[\CUWF(\mathsf{AZ}(X_k)) = D(G^\vee n,1).\]
The right hand side can be computed using the tables in \cite[Section 6]{Acharduality}. 

\vspace{5mm}
\begin{longtable}{|c|c|c|c|c|c|c|c|}
    \hline
        & $G^{\vee}n$ & $\rho$ & I-spherical? & Tempered? & Unitary? & $\mathsf{AZ}$ & $\CUWF$\\ \hline
        
        $X_1$ & $F_4(a_3)$ & $(4)$ & yes & yes & yes & $X_{20}$ & $(F_4,1)$ \\ \hline
        $X_2$ & $F_4(a_3)$ & $(31)$ & yes & yes & yes & $X_{19}$ & $(F_4(a_1),(12))$\\ \hline
        $X_3$ & $F_4(a_3)$ & $(2^2)$ & yes & yes & yes & $X_{17}$ & $(F_4(a_1),1)$\\ \hline
        $X_4$ & $F_4(a_3)$ & $(21^2)$ & yes & yes & yes & $X_{13}$ & $(C_3,1)$\\ \hline
        
        $X_5$ & $F_4(a_3)$ & $(1^4)$ & no & yes & yes & $X_5$ & $(F_4(a_3), 1)$ \\ \hline

        $X_6$ & $C_3(a_1)$ & $(2)$ & yes & no & yes & $X_{15}$ & $(F_4(a_2),1)$\\ \hline
        $X_7$ & $C_3(a_1)$ & $(1^2)$ & yes & no & yes & $X_9$ & $(F_4(a_3),(1234)$\\ \hline
        
        $X_8$ & $A_1+\widetilde{A}_2$ & $(1)$ & yes & no & yes & $X_8$ & $(F_4(a_3),(123))$\\ \hline
        
        $X_9$ & $\widetilde{A}_1+A_2$ & $(1)$ & yes & no & yes & $X_7$ & $(F_4(a_3),(12))$\\ \hline
        
        $X_{10}$ & $B_2$ & $(2)$ & yes & no & yes  & $X_{18}$ & $(F_4(a_1),1)$\\ \hline
        $X_{11}$ & $B_2$ & $(1^2)$ & yes & no & yes & $X_{11}$ & $(F_4(a_3),(12)(34))$\\ \hline
        
        $X_{12}$ & $A_2$ & $(2)$ & yes & no & yes & $X_{14}$ & $(B_3,1)$\\ \hline
        $X_{13}$ & $A_2$ & $(1^2)$ & yes & no & yes & $X_4$ & $(F_4(a_3),1)$\\ \hline
        
        $X_{14}$ & $\widetilde{A}_2$ & $(1)$ & yes & no & yes & $X_{12}$ & $(C_3,1)$\\ \hline
        
        $X_{15}$ & $A_1+\widetilde{A}_1$ & $(1)$ & yes & no & yes & $X_6$ & $(F_4(a_3),(12))$\\ \hline
        $X_{16}$ & $A_1+\widetilde{A}_1$ & $(1)$ & yes & no & no & $X_{16}$ & $(F_4(a_2),1)$ \\ \hline
        
        $X_{17}$ & $\widetilde{A}_1$ & $(2)$ & yes & no & yes & $X_3$ & $(F_4(a_3),1)$ \\ \hline
        $X_{18}$ & $\widetilde{A}_1$ & $(1^2)$ & yes & no & yes & $X_{10}$ & $(F_4(a_3),(12)(34))$\\ \hline
        
        $X_{19}$ & $A_1$ & $(1)$ & yes & no & yes & $X_2$ & $(F_4(a_3),1)$ \\ \hline
        
        $X_{20}$ & $0$ & $(1)$ & yes & no & yes & $X_1$  & $(F_4(a_3),1)$\\ \hline
    \caption{Irreducible representations of split $F_4$ with infinitesimal character $q^{h^{\vee}/2}$ for $\OO^{\vee} = F_4(a_3)$.}
    \label{table:f4a3}
\end{longtable}
\vspace{5mm}
Note that $D(\OO^{\vee},1) = (F_4(a_3),1)$, see \cite[Section 6]{Acharduality}. So the Arthur packet attached to $\OO^{\vee}$ is the set
\begin{equation}\label{eq:smallset}
\Pi_{\OO^{\vee}}^{\mathsf{Art}}(\mathbf{G}(\mathsf{k})) = \{X_5,X_{13},X_{17},X_{19},X_{20}\}\end{equation}
Note that this is precisely the set of anti-tempered representations in $\Pi^{\mathrm{Lus}}_{q^{\frac{1}{2}h^{\vee}}}(\mathbf{G}(\sfk))$, as predicted by Theorem \ref{thm:main}. 

On the other hand, the \emph{weak Arthur packet} attached to $\OO^{\vee}$, see (\ref{eq:weakpacket}), is the much larger set
\begin{equation}\label{eq:bigset}
\Pi^{\mathsf{Weak}}_{\OO^{\vee}}(\mathbf{G}(\sfk)) = \{X_5,,X_7,X_8,X_9,X_{11}, X_{13},X_{15},X_{17},X_{18},X_{19},X_{20}\}\end{equation}
In this case, we can attempt to verify Conjecture \ref{conj:weakpacket} by hand.

There are 10 simplified Arthur parameters $\widetilde{\psi}: \mathrm{SL}(2,\CC)_{\mathsf{Lan}} \times \mathrm{SL}(2,\CC)_{\mathsf{Art}} \to G^{\vee}$ satisfying the infinitesimal character condition (\ref{eq:inflcharsum}). The corresponding pairs $(\OO_{\mathsf{Lan}}^\vee,\OO_{\mathsf{Art}}^\vee)$ of nilpotent orbits are
\[(0,F_4(a_3)), \ (A_1,C_3(a_1)), \ (\widetilde A_1, B_2), \ (A_1+\widetilde A_1,  A_1+\widetilde A_2), \ (\widetilde A_1+A_2,\widetilde A_1+A_2).
\]
together with the `flips' of these pairs. We believe that the Arthur packets contained in the weak Arthur packet (\ref{eq:bigset}) are as in Table \ref{ta:table2}. This is our naive guess for the decomposition of the weak Arthur packet; it is compatible with (\ref{eq:AZflip}), Conjecture \ref{conj:Arthur}(i),(ii), and the expectation for the occurrence of the supercuspidal representation in the Arthur packets.

\begin{table}[H]
    \begin{tabular}{|c|c|c|} \hline
    $\OO_{\mathsf{Lan}}^\vee$ &$\OO_{\mathsf{Art}}^\vee$ &Arthur packet\\
    \hline
    $0$ &$F_4(a_3)$ &\{$X_5, X_{13}, X_{17}, X_{19}, X_{20}\}$\\
    \hline
    $\widetilde A_1$ &$B_2$ &$\{X_5,X_{17}, X_{18}, X_{11}\}$\\
    \hline
    $A_1+\widetilde A_1$  &$A_1+\widetilde A_2$  &$\{X_5,X_{15}, X_8\}$\\
    \hline
    $\widetilde A_1+A_2$  &$\widetilde A_1+A_2$ & $\{X_5,X_9, X_7\}$\\ 
    \hline
\end{tabular}
\caption{Arthur packets in weak packet attached to $F_4(a_3)$.}
\label{ta:table2}
\end{table}

\begin{sloppypar} \printbibliography[title={References}] \end{sloppypar}

\end{document}